\documentclass[11pt,a4paper]{article}
\usepackage[T1]{fontenc}
\usepackage{lmodern}
\usepackage{amsmath,amssymb,amsthm,mathtools}
\usepackage[margin=1in]{geometry}
\usepackage{microtype}
\usepackage{needspace}
\usepackage{fancyhdr}
\usepackage{hyperref}
\usepackage{aliascnt}
\usepackage[nameinlink,capitalise,noabbrev]{cleveref}
\usepackage{tikz}
\usetikzlibrary{arrows.meta,decorations.markings}
\hypersetup{
	colorlinks=false,
	pdfborder={0 0 1},
	linkbordercolor={1 0 0},
	citebordercolor={0 1 0}
}

\newcommand{\thmref}[1]{%
	\hyperref[#1]{Theorem~\ref*{#1}}%
}

\newtheorem{theorem}{Theorem}[section]
\newaliascnt{lemma}{theorem}
\newtheorem{lemma}[lemma]{Lemma}
\aliascntresetthe{lemma}
\newaliascnt{corollary}{theorem}
\newtheorem{corollary}[corollary]{Corollary}
\aliascntresetthe{corollary}
\newtheorem{proposition}{Proposition}[section]
\crefname{proposition}{Proposition}{Propositions}
\Crefname{proposition}{Proposition}{Propositions}
\newtheorem{claim}{Claim}
\crefname{claim}{Claim}{Claims}
\Crefname{claim}{Claim}{Claims}
\theoremstyle{remark}

\numberwithin{equation}{section}
\DeclareMathOperator{\ex}{ex}
\newcommand{\Prob}{\mathbb{P}}
\newcommand{\E}{\mathbb{E}}
\newcommand{\calP}{\mathcal{P}}
\newcommand{\calC}{\mathcal{C}}
\newcommand{\al}{\alpha_2}
\title{A near-linear Chv\'atal--Erd\H{o}s condition\\
	for Hamilton cycles in digraphs}
\author{
	Chengli Li\thanks{School of Mathematical Sciences, South China Normal University,
	Guangzhou 510631, P.R. China. Email: \texttt{lichengli0130@126.com}.}
	\and
	Bo Ning\thanks{College of Computer Science, Nankai University,
		Tianjin 300350, P.R. China. Email: \texttt{bo.ning@nankai.edu.cn}.
}
}

\date{}
\begin{document}
	\maketitle
	\begin{abstract}
		For a digraph $D$, let $\alpha_2(D)$ be the largest size of a vertex set
		containing no directed $2$-cycle. Let $f_2(a)$ be the least positive integer
		$k$ such that every $k$-strongly connected digraph $D$ with
		$\alpha_2(D)\le a$ has a Hamilton cycle. Jackson and Ordaz conjectured that
		$f_2(a)\le a+1$. Towards this conjecture, we establish the near-linear bound
		$
		f_2(a)=O\!\left(\frac{a(\log a)^4}{(\log\log a)^2}\right),
		$
		and hence
		$f_2(a)=O_\varepsilon(a^{1+\varepsilon})$ for every fixed $\varepsilon>0$.
		We also disprove the pancyclicity conjecture of Jackson and Ordaz
		that every digraph $D$ with $\kappa(D)\ge\alpha_2(D)+1$ contains
		a directed cycle of every length from $2$ to $|V(D)|$.
	\end{abstract}
	
	\section{Introduction}	
All graphs are finite and simple, and all digraphs are finite, without loops or parallel arcs.
Throughout this paper, we use $G$ for a graph and $D$ for a digraph.
A graph $G$ of order $n\ge3$ is \emph{Hamiltonian} if it contains a cycle through all its vertices, and \emph{pancyclic} if it contains cycles of every length from $3$ to $n$.
The notion of pancyclicity was introduced by Bondy in~\cite{BondyP}. In 1973, he proposed his famous meta-conjecture~\cite{BondyMeta}, which asserts that any non-trivial condition implying Hamiltonicity should also imply pancyclicity, up to a small class of exceptional graphs.

One such condition is the classical theorem of Chv\'atal and Erd\H{o}s~\cite{CE}.
For a graph $G$, let $\alpha(G)$ denote its independence number, the maximum size of a set of pairwise nonadjacent vertices, and let $\kappa(G)$ denote its vertex-connectivity.
The theorem states that every graph $G$ of order at least three with $\kappa(G)\ge\alpha(G)$ is Hamiltonian.
In the spirit of Bondy's meta-conjecture, Jackson and Ordaz~\cite{JO} conjectured that every graph $G$ with $\kappa(G)>\alpha(G)$ is pancyclic.
The strict inequality is necessary, since balanced complete bipartite graphs satisfy $\kappa(G)=\alpha(G)$ and have no odd cycles.

Keevash and Sudakov~\cite{KS} proved that $\kappa(G)\ge 600\alpha(G)$ implies pancyclicity, establishing the conjecture up to a constant factor. Dragani\'c, Munh\'a Correia and Sudakov~\cite{DMSCE} subsequently obtained an asymptotically optimal result, showing that
$
\kappa(G)\ge (1+o(1))\alpha(G)
$
already suffices for pancyclicity.
More recently, Letzter~\cite{Letzter} proved the conjecture for all sufficiently large graphs, retaining the condition $\kappa(G)>\alpha(G)$ without an asymptotic loss.
A closely related and important problem was posed by Erd\H{o}s~\cite{ErdosCycles} in 1972. He asked how large a Hamiltonian graph must be, in terms of its independence number, to guarantee pancyclicity, and conjectured that order $\Omega(\alpha(G)^2)$ should suffice. Dragani\'c, Munh\'a Correia and Sudakov~\cite{DMSHam} resolved this conjecture by proving that every Hamiltonian graph with independence number at most $k$ and at least $(2+o(1))k^2$ vertices is pancyclic, and this bound is asymptotically best possible.

Hamiltonicity and cycles of different lengths have also been extensively studied in digraphs; see the surveys~\cite{JO,KO}.
A digraph is \emph{Hamiltonian} if it contains a directed cycle through every vertex, and \emph{pancyclic} if it contains a directed cycle of every length from $2$ to its order.
A nonempty digraph is \emph{strongly connected} if it contains a directed path from $u$ to $v$ for every ordered pair of vertices $u,v$, allowing a path of length zero when $u=v$.
Ghouila-Houri~\cite{GH} proved that a strongly connected $n$-vertex digraph is Hamiltonian if the sum of the indegree and outdegree of each vertex is at least $n$.
H\"aggkvist and Thomassen~\cite{HTPancyclic} showed that the same degree condition also guarantees pancyclicity, with the complete balanced bipartite digraph having both arcs between every pair of vertices in different parts as the only possible exception.

Sharp degree conditions have also been obtained for oriented graphs, that is, digraphs with no pair of oppositely directed arcs.
Keevash, K\"uhn and Osthus~\cite{KKO} proved that every sufficiently large $n$-vertex oriented graph with minimum indegree and outdegree at least $(3n-4)/8$ is Hamiltonian, and that this bound is best possible.
More recently, Lo, Patel and Y\i ld\i z~\cite{LPY} proved that every sufficiently large $d$-regular oriented graph on $n\le4d+1$ vertices is Hamiltonian, where $d$-regular means that each vertex has indegree and outdegree $d$.

The Chv\'atal--Erd\H{o}s theorem suggests seeking analogous conditions in terms of connectivity and independence in digraphs.
For a positive integer $k$, a digraph $D$ is \emph{$k$-strongly connected} if $|V(D)|\ge k+1$ and $D-S$ is strongly connected for every $S\subseteq V(D)$ with $|S|<k$.
We write $\kappa(D)$ for the largest such $k$, and set $\kappa(D)=0$ if $D$ is not strongly connected or has at most one vertex.
For a digraph $D$, its \emph{$2$-cycle graph} $G_2(D)$ is the graph on $V(D)$ in which two vertices are adjacent precisely when they form a directed $2$-cycle in $D$.
We define $\al(D)=\alpha(G_2(D))$; equivalently, $\al(D)$ is the largest size of a vertex set containing no directed $2$-cycle.
Replacing every edge of a graph by two opposite arcs preserves its connectivity and turns its independence number into $\al(D)$.
Thus $\al$ gives a natural directed analogue of the independence number in Chv\'atal--Erd\H{o}s type conditions; see~\cite{BG,JO,KO}.

For a positive integer $a$, let $f_2(a)$ be the least positive integer $k$ such that every $k$-strongly connected digraph $D$ with $\al(D)\le a$ is Hamiltonian.
Jackson and Ordaz~\cite{JO} conjectured that $\kappa(D)\ge\al(D)+1$ guarantees a Hamilton cycle, which would imply $f_2(a)\le a+1$.
Jackson~\cite{Jackson} proved that $f_2$ exists and obtained $f_2(a)\le2^a(a+2)!$.
K\"uhn and Osthus~\cite{KO} asked whether $f_2$ admits a polynomial upper bound.
Ai and Shi~\cite{AiShi} recently proved that $f_2(a)\le 2a^3+2$.
On the other hand, replacing every edge of $K_{a-1,a}$ by two opposite arcs gives a digraph $D$ with $\kappa(D)=a-1$ and $\al(D)=a$ that is not Hamiltonian, since every directed cycle meets the two parts in the same number of vertices.
Hence $f_2(a)\ge a$ for $a\ge2$.
We prove a near-linear upper bound, within a polylogarithmic factor of the bound conjectured by Jackson and Ordaz.
	\begin{theorem}\label{thm:asymptotic}
		There is an absolute constant $c>0$ such that, for every integer $a\ge3$,
		\[
		f_2(a)\le c\frac{a(\log a)^4}{(\log\log a)^2}.
		\]
	\end{theorem}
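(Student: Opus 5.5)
We prove the bound by running the classical Chv\'atal--Erd\H{o}s rotation argument on paths rather than cycles, and by using a Ramsey-type control on $\al$ in place of a single independent set. Let $D$ be $k$-strongly connected with $\al(D)\le a$, where $k=c\,a(\log a)^4/(\log\log a)^2$. Suppose $D$ is not Hamiltonian.

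\textbf{Step 1: cycle extension.} Fix a longest cycle $C$ and a vertex $v\notin C$. The $k$-strong connectivity gives, via the directed fan lemma, $k$ internally disjoint paths from $v$ to $C$ and $k$ paths from $C$ to $v$. Their endpoints give sets $X^-$ and $X^+$ on $C$, each of size $k$.

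\textbf{Step 2: forbidden configurations.} If some $x\in X^+$ has its successor $x^+$ in $X^-$, the cycle extends, contradicting maximality. More generally, the successors of the vertices of $X^+$ have strong non-adjacency properties:
\begin{itemize}
\item[(i)] two successors $x_i^+$ and $x_j^+$ cannot form a $2$-cycle;
\item[(ii)] with one orientation of the arc between them, the usual crossing-chord rerouting lengthens $C$.
\end{itemize}
In digraphs only one arc direction is directly forbidden. So the set $S=\{x^+ : x\in X^+\}$ spans no $2$-cycle pairs only after we analyse the tournament-like structure of arcs between successors.

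\textbf{Step 3: recover a large $2$-cycle-free set.} I would partition $S$ according to the cyclic order and look for a long sequence $s_1,\dots,s_m$ with all forward (or all backward) arcs absent. The goal is to show that this structure forces $\al\ge a+1$ unless $|S|$ is only $O(a\cdot\mathrm{polylog}\,a)$. To do this, iterate over scales: at each dyadic scale of cyclic distance, use a rotation--extension lemma (\`a la P\'osa) to show the arcs between successors at that scale form an acyclic, transitive-tournament-like pattern. A Dilworth/Erd\H{o}s--Szekeres argument then extracts either a long antichain, which is $2$-cycle-free and contradicts $\al\le a$, or a long chain, which gives a cycle extension.

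Losing a $\log$ factor per stage over $\log a/\log\log a$ stages, with two such nested recursions (out-side and in-side), gives $(\log a)^4/(\log\log a)^2$.

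\textbf{Main obstacle.} Step 3 is the crux: converting the one-sided directed exclusions into a genuine $2$-cycle-free set without the cubic loss of Ai and Shi. I would first try a multi-scale absorption scheme. Choose the branching parameter $t=\log a$ and recurse on blocks of $C$ of geometrically shrinking lengths, $\log a/\log\log a$ levels deep. Each level costs a factor $t$ in connectivity but shrinks the ambiguity in arc orientation. Balancing the parameters should yield the stated bound.
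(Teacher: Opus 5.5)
There is a genuine gap. Step~3, which you correctly identify as the crux, is a hope rather than an argument, and Steps~1--2 do not supply the exclusions it would need.

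The undirected Chv\'atal--Erd\H{o}s exclusion comes from the rerouting $v\,x_i\overleftarrow{C}x_j^+\,x_i^+\overrightarrow{C}x_j\,v$. This runs along a segment of $C$ backwards and uses the fan in both directions. In a digraph the arcs of $C$ need not lie in $2$-cycles, so no segment of $C$ can be reversed. Moreover, the paths $x\leadsto v$ with $x\in X^+$ give no way back from $v$ into $C$. Hence your claim~(i), that the successors of $X^+$ span no $2$-cycle, does not follow from anything you have set up, and~(ii) is not stated precisely enough to check.

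Step~3 then names no concrete rotation--extension lemma. It defines no order relation to which Dilworth or Erd\H{o}s--Szekeres could be applied, and gives no reason why an antichain would be $2$-cycle-free. Even the bookkeeping fails. If each of $\log a/\log\log a$ levels costs a factor $t=\log a$ in connectivity, the total loss is $t^{\log a/\log\log a}=a$. Two nested recursions would then give a bound of order $a^3$, not $a(\log a)^4/(\log\log a)^2$.

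The missing idea is to abandon the longest-cycle argument and use the $2$-cycle structure directly as an absorber. The paper proves Theorem~\ref{thm:quantitative} and then optimizes $k$. It takes a maximal family of vertex-disjoint copies of $C_{2k}$ in $G_2(D)$. The uncovered set $R$ induces a $C_{2k}$-free graph with independence number at most $a$, so Pikhurko's bound and Caro--Wei give $|R|\le r_k(a)$.

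In each copy, a retained edge, a direction and one of $\ell=\lceil\log_2 a\rceil+1$ colours are chosen at random for the complementary path. Lemma~\ref{lem:selection} shows that every colour class yields an $a$-strongly connected auxiliary digraph once $\kappa(D)-|R|\ge h_k(a)$. Then $R$ and the retained edges are covered by at most $a$ paths. These are merged pairwise via Menger's theorem (Lemma~\ref{lem:linkage}) using one fresh colour per round, and a last colour closes the cycle. Each unused complementary path is absorbed by replacing its retained edge with the other side of its $2k$-cycle, which can be traversed in either direction.

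The exponent comes from taking $k\approx\log a/(2\log\log a)$. This makes $a^{1/(k-1)}=O((\log a)^2)$, keeping $r_k(a)$ near-linear, while $h_k(a)=2^{18}k^2a\ell^2$ contributes the factor $(\log a)^4/(\log\log a)^2$. It does not come from any multi-scale recursion.
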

	All unsubscripted logarithms are natural.
	We first prove a more general bound depending explicitly on the length of the even cycles used in the proof.
	For integers $a\ge2$ and $k\ge2$, let $\ell=\lceil\log_2 a\rceil+1$ and
	\[
	r_k(a)=\left\lceil\left(2(k-1)a\right)^{\frac{k}{k-1}}
	+\left(32k+\frac{k}{k-1}\right)a\right\rceil,
	\qquad h_k(a)=2^{18}k^2a\ell^2.
	\]
	\begin{theorem}\label{thm:quantitative}
		For all integers $a\ge2$ and $k\ge2$ with $k\not\equiv0\pmod{3}$, we have $f_2(a)\le r_k(a)+h_k(a)$.
	\end{theorem}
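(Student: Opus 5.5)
The plan has three steps: use the term $r_k(a)$ to find, inside the $2$-cycle graph, a large structure that is rich in even cycles, then use the term $h_k(a)$ to absorb every vertex into a single Hamilton cycle.

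\textbf{Setup.} Let $D$ be $(r_k(a)+h_k(a))$-strongly connected with $\al(D)\le a$, and write $G=G_2(D)$, so $\alpha(G)\le a$. Suppose for contradiction that $D$ is not Hamiltonian, and take a longest directed cycle $C$. The standard Chv\'atal--Erd\H{o}s-type argument for digraphs runs as follows. Pick a vertex $v\notin C$. By connectivity there are many internally disjoint paths from $v$ to $C$ and from $C$ to $v$. By maximality of $C$, the successors (respectively predecessors) on $C$ of their endpoints cannot be joined by arcs in both directions, and cannot be joined to $v$ appropriately. Since $\alpha(G)\le a$, among any $a+1$ such vertices two form a $2$-cycle, which lets us reroute $C$ through $v$, a contradiction. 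This works directly only for $2$-cycles forming specific configurations. Jackson's and Ai--Shi's approaches lose much more because one needs long rerouting gadgets. My plan is to replace the crude gadgets with \emph{even cycles of length $2k$ in $G$}.

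\textbf{Step 1.} This is a Tur\'an-type step. The bound $(2(k-1)a)^{k/(k-1)}$ resembles the Bondy--Simonovits even-cycle extremal number $\ex(n,C_{2k})=O(k n^{1+1/k})$, here used in complementary form via independence number. Any set $X$ of at least $r_k(a)$ vertices spans, in $G$, a $C_{2k}$ or something stronger; more precisely, it spans many disjoint such structures (a linear-in-$a$ additive term $32ka$). I would prove this by a greedy argument: $\alpha(G[X])\le a$ forces average degree at least $|X|/a-1$, and Bondy--Simonovits then gives a $2k$-cycle once $|X|/a\gtrsim k|X|^{1/k}$, which is exactly $|X|\gtrsim (ka)^{k/(k-1)}$. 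The condition $k\not\equiv 0\pmod 3$ should enter when the $2k$-cycle in $G$ (a bidirected cycle) is used with an orientation of a $C$-segment. There one needs that some rotation/rerouting length is not divisible by $3$, i.e.\ a parity/modular obstruction like the one in Chv\'atal--Erd\H{o}s proofs for tournaments. I would isolate this as a lemma: a bidirected $C_{2k}$ whose vertices are successors of endpoints of a $2k$-fan from an outside vertex yields a longer cycle when $3\nmid k$.

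\textbf{Step 2.} This step absorbs leftover vertices, using the term $h_k(a)=2^{18}k^2a\ell^2$ with $\ell=\lceil\log_2 a\rceil+1$. The $\ell^2$ suggests an iterative halving: repeatedly split the endpoint set of the fan and recurse $\ell$ rounds, since after $\log_2 a$ halvings an independent-set bound of $a$ is exhausted. In each round we spend $O(k^2a\ell)$ connectivity to link paths via Menger.

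\textbf{Main obstacle.} The hardest part is the rerouting lemma in Step 1. The fan endpoints on $C$ interleave arbitrarily, and a bidirected $2k$-cycle among their successors must be converted into a single longer directed cycle. First I would order the endpoints along $C$ and analyse the cyclic permutation induced by the $2k$-cycle. The aim is to show that some choice of which $2$-cycle arcs to use produces one cycle rather than several; this is where I expect $3\nmid k$ to be needed. Second, I would handle the failure case by the dyadic recursion of Step 2.
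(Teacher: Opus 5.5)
Your Step~1 correctly locates the source of $r_k(a)$: an even-cycle Tur\'an bound combined with $\alpha\le a$. To obtain exactly $r_k(a)$ you need Pikhurko's explicit bound $\ex(n,C_{2k})\le(k-1)n^{1+1/k}+16(k-1)n$ rather than Bondy--Simonovits. Beyond that the proposal has a genuine gap. All of the difficulty sits in a ``rerouting lemma'' that you state only as a hope, and there is no evident mechanism for it.

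The base step you sketch already fails in digraphs. A $2$-cycle between successors of attachment points on a longest cycle $C$ does not let you reroute through $v$, because the undirected Chv\'atal--Erd\H{o}s rerouting traverses a segment of $C$ backwards, and the arcs of $C$ need not lie in $2$-cycles. Using such $2$-cycle arcs typically produces several disjoint cycles rather than one longer cycle, as you yourself anticipate. A bidirected $C_{2k}$ on these successors does not remove the obstruction. The segments of $C$ between attachments can still only be traversed forwards, and whether the pieces close up into a single cycle depends on the arbitrary interleaving of the attachments along $C$; nothing in your sketch ties this to $k\bmod 3$. Step~2 names no object that is absorbed or linked, so it cannot supply the missing argument.

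In the actual proof, $3\nmid k$ plays a different role. It makes the graph on $V(C)$ with edges $\{v_i,v_{i+3}\}$, $i\in\mathbb Z_{2k}$, connected. These pairs are the possible endpoint pairs of the path on $2k-2$ vertices left after deleting the two ends of one edge of a $2k$-cycle $C$.

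The idea you are missing is to drop the longest-cycle framework and use the $2k$-cycles of $G_2(D)$ as absorbers.
\begin{itemize}
\item Take a maximal family $\calC$ of vertex-disjoint copies of $C_{2k}$ in $G_2(D)$. The uncovered set $R$ spans a $C_{2k}$-free graph with independence number at most $a$, so $|R|\le r_k(a)$ and $D-R$ remains $h_k(a)$-strongly connected.
\item In each $C\in\calC$, keep one edge $e_C$ and treat the remaining path $P_C$ on $2k-2$ vertices as optional. Because $C$ is bidirected, a directed cycle that uses $e_C$ but misses $P_C$ can absorb $P_C$ by replacing $e_C$ with the other route around $C$.
\item It therefore suffices to find a directed cycle through $R$ and all retained edges that uses each $P_C$ entirely or not at all.
\end{itemize}
This cycle is built as follows. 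Cover $R$ and the retained edges by at most $a$ directed paths, using endpoint joining as in Gallai--Milgram. Merge them in pairs for $\lceil\log_2 a\rceil$ rounds, then close up. Each round links endpoints by Menger's theorem inside an auxiliary digraph $D_j$ whose vertices are the complementary paths of a fresh colour $j$, so linkages from different rounds are disjoint.

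The real work is \Cref{lem:selection}. There the retained edges, orientations and colours are chosen at random so that every $D_j$ is $a$-strongly connected and every relevant vertex has at least $a$ colour-$j$ neighbours. The proof uses Chernoff bounds, Caro--Wei, negative association, and a union bound over $3^{2a-1}$ encodings of a potential bad separation. This is where $3\nmid k$ enters (\Cref{cl:effective}). It is also where the factor $\ell^2$ in $h_k(a)$ comes from: a vertex is the relevant endpoint of a colour-$j$ path with probability $1/(2k\ell)$, so arcs of a cut matching survive with probability $1/(2k\ell)^2$. The factor does not come from connectivity spent in each round.
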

	From Theorem~\ref{thm:asymptotic}, we obtain the following corollary.
	\begin{corollary}\label{cor:epsilon}
		For every fixed $\varepsilon>0$, we have $f_2(a)=O_\varepsilon(a^{1+\varepsilon})$.
	\end{corollary}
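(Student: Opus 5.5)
The corollary follows from Theorem~\ref{thm:asymptotic} by comparing growth rates; no new combinatorics is needed. Fix $\varepsilon>0$. For $a\ge3$ we have $\log\log a\ge\log\log 3>0$, so the denominator in Theorem~\ref{thm:asymptotic} is bounded below by the positive absolute constant $(\log\log 3)^2$. This gives
\[
f_2(a)\le c\,\frac{a(\log a)^4}{(\log\log a)^2}\le \frac{c}{(\log\log 3)^2}\,a(\log a)^4 \qquad (a\ge 3).
\]
We could instead drop the denominator only once $\log\log a\ge1$, i.e.\ for $a\ge e^e$, but the bound above already holds for every $a\ge3$.

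It remains to show $(\log a)^4\le C_\varepsilon a^{\varepsilon}$ for all $a\ge3$. Substitute $t=\log a\ge\log 3>0$ and consider $g(t)=t^4e^{-\varepsilon t}$ on $t>0$. Its derivative is $t^3e^{-\varepsilon t}(4-\varepsilon t)$, so $g$ attains its maximum at $t=4/\varepsilon$. Hence
\[
(\log a)^4 a^{-\varepsilon}\le \left(\frac{4}{\varepsilon e}\right)^4=:C_\varepsilon .
\]
Combining the two displays gives $f_2(a)\le c\,(\log\log3)^{-2}C_\varepsilon\, a^{1+\varepsilon}$ for all $a\ge3$.

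The small cases $a\in\{1,2\}$ are finitely many, each with a finite value of $f_2(a)$: Jackson's bound $f_2(a)\le 2^a(a+2)!$ applies, or Theorem~\ref{thm:quantitative} with $k=2$ covers $a=2$. Enlarging the constant to absorb these values yields $f_2(a)=O_\varepsilon(a^{1+\varepsilon})$ for all positive integers $a$.

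The only point needing care is keeping the constant uniform in $a$ and dependent only on $\varepsilon$, and the explicit maximization of $g$ gives this directly.
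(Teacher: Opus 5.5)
Your proposal is correct and follows the same route as the paper, which also derives the corollary directly from Theorem~\ref{thm:asymptotic} by observing that $(\log a)^4/(\log\log a)^2=o(a^{\varepsilon})$. You additionally make the constant explicit, by bounding $(\log\log a)^2$ below for $a\ge3$ and maximizing $t^4e^{-\varepsilon t}$, and you handle the finitely many small values of $a$; the paper leaves both of these implicit.
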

	
	Jackson and Ordaz~\cite{JO} also conjectured
	that every digraph $D$ with $\kappa(D)\ge\al(D)+1$ is pancyclic.
	We disprove this conjecture. In fact, even a linear connectivity
	condition in terms of $\al(D)$ does not guarantee pancyclicity.
	
	\begin{theorem}\label{thm:pancyclic-obstruction}
		There is no constant $c>0$ such that every Hamiltonian digraph $D$ satisfying
		$\kappa(D)\ge c\al(D)$ is pancyclic.
	\end{theorem}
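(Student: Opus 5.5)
We construct, for every $c>0$, a Hamiltonian digraph $D$ with $\kappa(D)\ge c\,\al(D)$ that has no directed cycle of some length $L$ with $2\le L\le|V(D)|$. The natural length to kill is $L=3$, because a $2$-cycle is unavoidable as soon as $\al(D)<|V(D)|$.

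\textbf{Construction.} Take a blow-up of a directed cycle whose length is not divisible by $3$. Fix an integer $m\ge 4$ with $3\nmid m$ (say $m=4$) and an integer $t$. Let $V(D)=V_0\cup\cdots\cup V_{m-1}$ with $|V_i|=t$. Make each $V_i$ a complete symmetric digraph, so all $2$-cycles live inside the parts. For every $i$ add all arcs from $V_i$ to $V_{i+1}$, indices mod $m$. There are no other arcs.

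\textbf{Step 1: no directed triangle.} Every arc either stays inside a part or advances the index by one. A directed cycle must have total advance $\equiv 0\pmod m$, so it either stays in a single part or has total advance a positive multiple of $m$. A closed walk of length $3$ therefore either lies in one part or advances by $m\in\{1,2,3\}$ (impossible, as $m\ge4$). The only remaining candidate is a triangle inside a complete symmetric part, and such triangles do exist. So this version fails and must be modified.

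\textbf{Fix.} Make the parts symmetric \emph{bipartite} rather than complete. Replace each $V_i$ by the complete symmetric bipartite digraph on $A_i\cup B_i$, each side of size $t$. Inside a part every closed walk then has even length, so there is no $3$-cycle within a part. A $3$-cycle using inter-part arcs would need total advance a multiple of $m\ge4$ in at most three steps, which is impossible. Hence $D$ has no directed $3$-cycle. With $n=2mt\ge 4$, the digraph $D$ is not pancyclic.

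\textbf{Step 2: parameters.} The graph $G_2(D)$ is a disjoint union of $m$ copies of $K_{t,t}$, so $\al(D)=mt$. This ratio is poor unless $m$ is bounded, so take $m=4$, giving $\al(D)=4t$. Removing fewer than $2t$ vertices leaves every part nonempty. The surviving digraph is strongly connected: arcs run from everything in part $i$ to everything in part $i+1$, and the cyclic order of the parts closes up. Hence $\kappa(D)\ge 2t$, and the ratio $\kappa/\al$ is only about $1/2$, so this does not beat arbitrary $c$.

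\textbf{Main obstacle.} The difficulty is exactly this ratio: we need $\al$ small compared with $\kappa$ while some short cycle length is still forbidden. This forces the $2$-cycle graph to have tiny independence number, i.e.\ it must be nearly complete, and yet the digraph must contain no triangle or other forbidden cycle. An undirected graph with $\alpha$ small and no triangles must be small, by Ramsey $R(3,s)\sim s^2/\log s$. So we keep $G_2(D)$ triangle-free with independence number about $\sqrt{n\log n}$. For connectivity, take a Ramsey-type triangle-free graph $H$ on $n$ vertices with $\alpha(H)=O(\sqrt{n\log n})$ and symmetrize it. Then add an orientation of the non-edges of $H$ chosen to create no directed triangle while making the digraph highly connected, e.g.\ a random "transitive by a linear order" orientation. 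A linear-order orientation creates no directed triangle among one-way arcs. Mixed triangles, with a $2$-cycle edge $uv$ plus one-way arcs $v\to w\to u$, must be avoided. Handling these mixed triangles is the step I expect to be hardest. I would try an ordering where $H$-edges join only vertices far apart and one-way arcs join only nearby vertices (a circular distance structure, like a Cayley digraph on $\mathbb{Z}_n$). Connectivity of order $n/2\gg c\sqrt{n\log n}$ would then follow from the circulant structure. If needed, the target cycle length could be changed to a longer odd length and the same argument run with a high-girth $G_2$.
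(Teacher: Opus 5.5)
\textbf{There is a genuine gap: the proposal never exhibits a digraph that works.} After Step 2 you drop the blow-up and switch to a single symmetrized Ramsey graph $H$ plus an orientation of its non-edges. That part is only a plan, and it has three problems.
\begin{enumerate}
\item The mixed triangles are not handled. Besides the type you mention, a one-way arc $w\to u$ together with symmetric edges $uv,vw\in E(H)$ already gives the directed triangle $u\to v\to w\to u$.
\item Connectivity is asserted, not proved.
\item Hamiltonicity is never checked for any candidate, although the statement requires it.
\end{enumerate}

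Moreover, the concrete near/far suggestion cannot succeed. Since $H$ is triangle-free, every neighbourhood is independent, so $d_H(v)\le\alpha(H)$. If one-way arcs are short and $H$-edges are long, then the vertices $v$ reaches by one-way arcs are pairwise non-adjacent in $H$. For example, take arcs $x\to x+i$ with $1\le i\le k$ and all $H$-edges of length greater than $k$. The one-way out-neighbours of $v$ then lie in $\{v+1,\dots,v+k\}$, which is independent in $H$, so there are at most $\alpha(H)$ of them. Since $G_2(D)=H$, this gives $\kappa(D)\le\delta^+(D)\le 2\alpha(H)=2\alpha_2(D)$, so the ratio stays bounded. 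For $\kappa(D)\gg\alpha_2(D)$, the one-way out-neighbourhoods must be much larger than $\alpha(H)$, and hence must contain many symmetric edges. A near/far separation forbids exactly that.

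The missing idea is that your own \emph{Fix} already has the right skeleton. In the blow-up of the directed $4$-cycle, the ratio in Step 2 was only $1/2$ because $\alpha(K_{t,t})$ is half the part size. Replace each part by a symmetrized copy of a triangle-free graph $H$ with $\alpha(H)=o(|V(H)|)$. An $R(3,s)$ Ramsey graph works, as does the explicit affine-plane graph $H_q$ of Kostochka--Pudl\'ak--R\"odl used in the paper, which has $q^2(q+1)$ vertices and $\alpha(H_q)\le 2q^2+q-1$. Your earlier arguments then go through:
\begin{enumerate}
\item Your Step 1 argument still excludes directed $3$-cycles. A cycle leaving a part has length at least $4$, and a $3$-cycle inside a part would be a triangle of $H$.
\item Your Step 2 argument gives $\kappa(D)\ge|V(H)|$.
\item $G_2(D)$ is four disjoint copies of $H$, so $\alpha_2(D)=4\alpha(H)$ and $\kappa(D)/\alpha_2(D)\ge|V(H)|/(4\alpha(H))\to\infty$.
\end{enumerate}
This escapes the bound above because each vertex of $V_i$ has all of $V_{i+1}$ as one-way out-neighbours, and that set contains many $H$-edges.

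Finally, write $V_i=\{v_{i,1},\dots,v_{i,N}\}$. The paths $v_{0,j}v_{1,j}v_{2,j}v_{3,j}$, chained by the arcs $v_{3,j}v_{0,j+1}$ with indices modulo $N$, form a Hamilton cycle. This is exactly the paper's proof.
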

\medskip\noindent\textbf{Proof sketch of \Cref{thm:asymptotic}.}
We first prove \Cref{thm:quantitative}.
Let $D$ satisfy $\alpha_2(D)\le a$ and
$\kappa(D)\ge r_k(a)+h_k(a)$, and let $G=G_2(D)$.
Choose a maximal family $\calC$ of pairwise vertex-disjoint
copies of $C_{2k}$ in $G$.
Let $R=V(G)\setminus\bigcup_{C\in\calC}V(C)$ be the remaining vertex set.
Then $G[R]$ is $C_{2k}$-free.
Motivated by the work of Chen, Faudree and Gould~\cite{CFG}, we apply Pikhurko's even-cycle Tur\'an bound and the Caro--Wei inequality to obtain $|R|\le r_k(a)$.
Hence $D-R$ is $h_k(a)$-strongly connected.

For each $C\in\calC$, we choose a retained edge $e_C$ and let $P_C$ be the complementary path on the remaining $2k-2$ vertices.
The main ingredient is \Cref{lem:selection}, which allows us to choose these edges, together with a direction and a colour for each $P_C$, so that the corresponding auxiliary digraph $D_j$ is $a$-strongly connected for every colour $j$ and has the required degree properties.
A directed path in $D_j$ expands to a directed path in $D$ by concatenating whole complementary paths of colour $j$.
The proof uses Chernoff bounds and the Caro--Wei inequality for the degree estimates, and combines strong connectivity with negative association to rule out directed separations of order less than $a$ in $D_j$.

The vertices of $R$, together with the retained edges, can be covered by at most $a$ pairwise vertex-disjoint directed paths.
We now use one new colour in each round.
By \Cref{lem:linkage}, we join the current paths in pairs, reducing their number from $p$ to $\lceil p/2\rceil$.
After at most $\lceil\log_2 a\rceil$ rounds, one path remains, and we use a final colour to close it into a directed cycle $C^*$.
This cycle covers $R$ and every retained edge, and uses each complementary path either in its entirety or not at all.
For each complementary path $P_C$ still outside $C^*$, we replace the retained edge $e_C$ by the other directed path around $C$ between its endpoints.
These replacements give a Hamilton cycle and prove \Cref{thm:quantitative}.
Finally, \Cref{thm:asymptotic} follows from \Cref{thm:quantitative} by optimizing the choice of $k$.

\medskip\noindent\textbf{Organization.}
In Section~\ref{sec:preliminaries}, we introduce the notation and collect the preliminary results.
In Section~\ref{sec:selection}, we prove the path-selection lemma.
In Section~\ref{sec:proofs}, we prove \Cref{thm:quantitative} and derive \Cref{thm:asymptotic}.
In Section~\ref{sec:pancyclicity}, we construct the counterexamples to the Jackson--Ordaz pancyclicity conjecture.

\medskip\noindent\textbf{Declaration of AI Use.}
The authors are grateful to AI tools for valuable assistance in exploring some of the proof strategies and constructing examples. This project began in July 2026. Early discussions with ChatGPT 5.6 explored several approaches, including linear expanders and the regularity lemma, but yielded only partial results. The authors compiled their reading notes and records of earlier attempts and shared them with the AI tools as background for subsequent discussions. The main progress came on 9 September 2026, when extensive author-guided discussions with ChatGPT 6 led to the near-linear bound stated in \Cref{thm:asymptotic}.
Starting from an initial ten-page draft, the authors completely rewrote the Introduction and reorganized and partly expanded the proofs, producing the present seventeen-page manuscript. The authors directed the research process, independently verified all mathematical arguments and calculations, and take full responsibility for the mathematical correctness and exposition of the paper.

	\section{Preliminaries}\label{sec:preliminaries}
	We follow the graph and digraph terminology of~\cite{BG,BM}.
	We write $V(G)$ and $E(G)$ for the vertex and edge sets of a graph, and $V(D)$ and $A(D)$ for the vertex and arc sets of a digraph.
	The order of a graph or digraph is its number of vertices, and its size is its number of edges or arcs.
	A directed path visits distinct vertices in an order consistent with its arcs, a directed cycle does the same cyclically and has length at least two.
	For $U\subseteq V(D)$, the induced subdigraph $D[U]$ keeps exactly the vertices in $U$ and all arcs
	of $D$ with both endpoints in $U$, and $D-U=D[V(D)\setminus U]$; induced subgraphs of an undirected graph
	are defined in the same way.
	For $v\in V(D)$, the outneighbourhood and inneighbourhood are
	\[
	N_D^+(v)=\{w\in V(D):vw\in A(D)\},\qquad
	N_D^-(v)=\{w\in V(D):wv\in A(D)\}.
	\]
	
	Let
	$
	d_D^+(v)=|N_D^+(v)|
	$
	for $v\in V(D)$, and, when $D$ is nonempty, let
	$
	\delta^+(D)=\min_{v\in V(D)} d_D^+(v).
	$
	For $v\in V(D)$ and $U\subseteq V(D)$, let
	$
	d_U^+(v)=|N_D^+(v)\cap U|.
	$
	The corresponding notation $d_D^-(v)$, $\delta^-(D)$, and
	$d_U^-(v)$ is defined using inneighbours. The notation $d_U^+(v)$ and
	$d_U^-(v)$ does not require $v\in U$.
	The tail and head of an arc $uv$ are $u$ and $v$, respectively.
	A source is a vertex of indegree zero, and a sink is a vertex of outdegree zero.
	For an undirected graph $G$, let
	$N_G(v)=\{w:vw\in E(G)\},$
	$N_G[v]=N_G(v)\cup\{v\}$ and
	$d_G(v)=|N_G(v)|.$
	
	The length of a path or cycle is its number of edges or arcs, and the internal vertices of a path
	are all its vertices other than its endpoints.
	A \emph{path cover} of a graph or digraph $H$ is a family of pairwise vertex-disjoint paths
	whose vertex sets partition $V(H)$; for a digraph, the paths are directed.
	A \emph{linear forest} is a graph whose components are paths, including isolated
	vertices, and it is \emph{spanning} in $G$ if it has vertex set $V(G)$.
	The endpoints of a linear forest are the vertices of degree at most one, and every path in $G_2(D)$ can
	be directed in either order in $D$.
	For a graph $H$, we write $\ex(n,H)$ for the largest number of
	edges in an $n$-vertex graph containing no subgraph isomorphic to $H$, not necessarily an induced
	subgraph.
	We use $[s]=\{1,\ldots,s\}$ and $A\mathbin{\dot\cup}B$ for a disjoint union.
	
	If $D$ is $k$-strongly connected, then every vertex has at least $k$ inneighbours and $k$ outneighbours.
	Indeed, if a vertex had fewer than $k$ outneighbours, deleting them would leave it unable to reach another remaining vertex, which exists because $|V(D)|\ge k+1$. The inneighbour assertion is analogous.
	Also, deleting a set $S$ with $|S|=s<k$ leaves a $(k-s)$-strongly connected digraph, because any
	further deletion of fewer than $k-s$ vertices removes fewer than $k$ vertices in total, and at least
	$k-s+1$ vertices remain before that further deletion.
	
	\begin{theorem}[Caro--Wei~\cite{Caro,Wei}]\label{thm:carowei}
		Every nonempty graph $G$ of order $n$ and size $m$ satisfies
		\[
		\sum_{v\in V(G)}\frac{1}{d_G(v)+1}\le\alpha(G),
		\qquad \frac{n^2}{2m+n}\le\alpha(G).
		\]
	\end{theorem}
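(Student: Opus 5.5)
The statement to prove is the Caro--Wei inequality (\Cref{thm:carowei}). I would prove the first inequality by the standard random ordering argument. Take a uniformly random linear ordering $\sigma$ of $V(G)$. Let $I_\sigma$ be the set of vertices $v$ that precede all of their neighbours in $\sigma$.

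First, $I_\sigma$ is independent. If $u,v\in I_\sigma$ were adjacent, then $u$ would precede $v$ and $v$ would precede $u$, which is impossible. Hence $|I_\sigma|\le\alpha(G)$ for every $\sigma$.

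Second, for a fixed $v$, the relative order of the $d_G(v)+1$ vertices of $N_G[v]$ is uniform. So $v$ comes first among them with probability exactly $1/(d_G(v)+1)$. By linearity of expectation,
\[
\E|I_\sigma|=\sum_{v\in V(G)}\frac{1}{d_G(v)+1}.
\]
Some ordering attains at least this expectation, which gives the first inequality.

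For the second inequality, I would apply convexity of $x\mapsto 1/(x+1)$ on $[0,\infty)$, in the form of Jensen's inequality or Cauchy--Schwarz. This gives
\[
\sum_{v}\frac{1}{d_G(v)+1}\ \ge\ \frac{n}{\frac1n\sum_v (d_G(v)+1)}=\frac{n^2}{\sum_v d_G(v)+n}=\frac{n^2}{2m+n},
\]
using the handshake identity $\sum_v d_G(v)=2m$. Concretely, Cauchy--Schwarz gives $\bigl(\sum_v 1\bigr)^2\le \sum_v (d_G(v)+1)\cdot\sum_v \frac{1}{d_G(v)+1}$. Combining this with the first inequality finishes the proof. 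Nonemptiness of $G$ ensures $n\ge1$, so no denominator vanishes.

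There is no real obstacle here. The only point needing care is the exact probability computation, which uses that $N_G[v]$ has exactly $d_G(v)+1$ distinct elements. This holds because $G$ is simple.
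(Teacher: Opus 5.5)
Your argument is correct and complete. The random-ordering construction gives an independent set of expected size $\sum_v 1/(d_G(v)+1)$, and Cauchy--Schwarz together with $\sum_v d_G(v)=2m$ turns this into the bound $n^2/(2m+n)$. The paper gives no proof of its own: it quotes the result from Caro and Wei and uses it as a black box, the degree-sum form in \Cref{cl:neighbours} and the edge-count form in \Cref{lem:remainder}. So there is no competing route to compare, and yours is the standard proof.
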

	\begin{theorem}[{Pikhurko~\cite{Pikhurko}}]\label{thm:pikhurko}
		For all integers $k\ge2$ and $n\ge1$, we have
		\[
		\ex(n,C_{2k})\le(k-1)n^{1+\frac{1}{k}}+16(k-1)n.
		\]
	\end{theorem}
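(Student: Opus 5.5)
The statement above is Pikhurko's bound, which the paper quotes from~\cite{Pikhurko}, so in the paper it is used as a black box. If I had to prove it, I would follow the breadth-first-search layering method of Bondy and Simonovits as refined by Verstra\"ete and Pikhurko. The aim is to show that a $C_{2k}$-free graph whose minimum degree is a bit more than $(k-1)n^{1/k}$ would need more than $n$ vertices.

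\emph{Step 1: reduction to a subgraph with large minimum degree.} Suppose $G$ is $C_{2k}$-free with more than $(k-1)n^{1+1/k}+16(k-1)n$ edges. I would repeatedly delete vertices of degree at most $e(G)/n$, where the threshold is the average degree of $G$ halved. Since this deletes fewer than $e(G)$ edges in total, the process stops at a nonempty subgraph $H$. Every vertex of $H$ then has degree more than
\[
d:=(k-1)n^{1/k}+16(k-1).
\]
The graph $H$ is still $C_{2k}$-free and has at most $n$ vertices.

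\emph{Step 2: layers and the expansion claim.} Fix a root $v$ and let $L_i$ be the set of vertices of $H$ at distance exactly $i$ from $v$. The key claim is that for $1\le i\le k-1$ the layers expand:
\[
|L_{i+1}|\ge \frac{d-16(k-1)}{k-1}\,|L_i|.
\]
Granting this, induction from $|L_1|\ge d$ gives $|L_k|>n^{(k-1)/k}\cdot n^{1/k}=n$, a contradiction, and this yields exactly the stated main term.

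\emph{Step 3: proving the expansion claim (the main obstacle).} Suppose a layer $L_{i+1}$ is too small. Then many edges of $H$ lie inside $L_i$, or between $L_i$ and $L_{i+1}$, so some bipartite or induced piece $F$ spanned by $L_i\cup L_{i+1}$ has average degree at least about $2k$. Here I would invoke a Bondy--Simonovits--Verstra\"ete type lemma:
\begin{quote}
If a bipartite graph $F$ has average degree at least some linear function of $k$, and $F$ is attached to a BFS tree so that its vertices have a common ancestor, then $F$ contains paths of every relevant length between suitable pairs of vertices. In particular, it contains a path of exactly the length that closes up with two tree branches into a cycle of length exactly $2k$.
\end{quote}
To make the branch lengths match, I would choose the \emph{last} common ancestor carefully. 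I would also use the standard ``$\Theta$-graph'' argument: a dense bipartite graph contains a theta subgraph, which supplies paths of consecutive parities.

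The delicate part is keeping the loss additive, so that the constant $16(k-1)$ comes out rather than a multiplicative factor. This forces one to handle the edges inside a layer and the edges to the next layer separately, splitting each layer by the parity of a tree ancestor. I expect this step to require the bulk of the case analysis. All remaining steps are routine counting.
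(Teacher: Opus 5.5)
The paper gives no proof of this statement; it quotes Pikhurko and uses the bound as a black box. Your Steps~1--2 set up the standard framework correctly: reduce to minimum degree $>d$, take BFS layers, and expand by the factor $\frac{d-16(k-1)}{k-1}=n^{1/k}$. Step~3, however, has a genuine gap, and it is quantitative rather than cosmetic.

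\textbf{Why a symmetric density lemma cannot work.} The vertices of $L_i$ have more than $d|L_i|$ edge-ends in total. So your expansion claim amounts to the \emph{asymmetric} estimate $e(L_i,L_{i+1})\le(k-1)|L_{i+1}|+O(k)|L_i|$, together with $O(k)|L_i|$ bounds on edges inside $L_i$ and back to $L_{i-1}$. Your Step~3 forgets the back edges: one vertex of $L_i$ may have many neighbours in $L_{i-1}$, so you need the estimate one level down and $|L_{i-1}|\le|L_i|$. The real problem is that a lemma with a symmetric average-degree threshold cannot put the factor $k-1$ on $|L_{i+1}|$. If $L_{i+1}$ is only slightly too small, $G[L_i,L_{i+1}]$ has average degree about $\frac{2d(k-1)}{d-15(k-1)}$, which tends to $2(k-1)$, not $2k$. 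Concretely, suppose every vertex of $L_{i+1}$ has exactly $k$ neighbours in $L_i$, and every vertex of $L_i$ has degree close to $d$, all forward. Then $|L_{i+1}|\approx\frac dk|L_i|$, which violates your claim once $d>16k(k-1)$. Yet every subgraph of $G[L_i,L_{i+1}]$ has average degree below $2k$, so no Bondy--Simonovits--Verstra\"ete lemma with threshold $ck$ can exclude this configuration. Such a lemma only gives a ratio of roughly $\frac{2d}{ck}$. That means a leading constant of about $\frac{ck}{2}\ge k$ instead of $k-1$, which is the older regime that Pikhurko's bound improves on.

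\textbf{The missing ingredient.} You need a path lemma that works with degree only $k$ on the outer side. Repeatedly delete from $G[L_i,L_{i+1}]$ the vertices of $L_{i+1}$ with at most $k-1$ remaining neighbours; this costs at most $(k-1)|L_{i+1}|$ edges. Also delete the vertices of $L_i$ with fewer than $Ck$ remaining neighbours; this costs less than $Ck|L_i|$ edges. If the asymmetric estimate fails, a nonempty bipartite $F$ survives in which outer vertices have degree at least $k$ and inner vertices have degree at least $Ck$. Now split $V(F)\cap L_i$ according to the children of a last common ancestor $z$ at depth $i-s$, where $1\le s\le k-1$. You must show that $F$ contains a path of length exactly $2k-2s$ between the two classes; with the two tree branches this closes a $C_{2k}$ through $z$. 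Proving this with the weak degree $k$ on one side is the real content of the theorem, and your sketch does not supply it. The symmetric lemma is adequate only for the edges inside $L_i$, whose cost is charged to $|L_i|$.

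Finally, in a bipartite graph all paths between two fixed vertices have the same parity. So a theta subgraph there cannot supply paths of consecutive parities; parity issues arise only for edges inside a layer.
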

	\Needspace{6\baselineskip}
	\begin{lemma}\label{lem:remainder}
		Let $a\ge2$ and $k\ge2$ be integers.
		If a graph $G$ contains no $C_{2k}$ and satisfies $\alpha(G)\le a$, then $|V(G)|\le r_k(a)$.
	\end{lemma}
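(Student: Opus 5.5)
We combine the two preliminary results directly. Let $n=|V(G)|$ and $m=|E(G)|$; if $n\le a$ we are done, since $r_k(a)\ge a$, so assume $n>a$. By \Cref{thm:carowei},
\[
\frac{n^2}{2m+n}\le\alpha(G)\le a,\qquad\text{hence}\qquad 2m\ge \frac{n^2}{a}-n .
\]
Since $G$ contains no $C_{2k}$, \Cref{thm:pikhurko} gives $m\le (k-1)n^{1+1/k}+16(k-1)n$. Dividing the combined inequality by $n$, we obtain
\[
\frac{n}{a}\le 2(k-1)n^{1/k}+32(k-1)+1,\qquad\text{that is,}\qquad n\le 2(k-1)a\,n^{1/k}+(32k-31)a .
\]

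It remains to solve this implicit inequality for $n$. Set $A=2(k-1)a$ and $B=(32k-31)a$, so that $n\le An^{1/k}+B$. We split into two cases. If $An^{1/k}\le B'$ for a suitable threshold $B'$, then $n\le B+B'$ directly. Otherwise, $n^{1-1/k}\le A+Bn^{-1/k}$. A cleaner route is to treat $x=n^{1/k}$: the function $x^k-Ax-B$ is convex and increasing once $x^{k-1}\ge A/k$, so it suffices to exhibit one value $n_0\le r_k(a)$ with $n_0>An_0^{1/k}+B$. Then every $n\ge n_0$ violates the inequality, and hence $n< n_0\le r_k(a)$.

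We take $n_0=A^{k/(k-1)}+\bigl(32k+\frac{k}{k-1}\bigr)a$, and write $n_0=T+E$ with $T=A^{k/(k-1)}$. By concavity of $t\mapsto t^{1/k}$ (a tangent-line or mean value bound at $T$),
\[
An_0^{1/k}\le A\bigl(T^{1/k}+\tfrac1k T^{1/k-1}E\bigr)=T+\tfrac1k A^{\frac{k}{k-1}\cdot\frac{1-k}{k}+1}E=T+\tfrac{E}{k},
\]
using $A\,T^{1/k}=A^{1+1/(k-1)}=T$ and $A\,T^{1/k-1}=A^{0}=1$. So we need
\[
T+E>T+\tfrac{E}{k}+B,\qquad\text{that is,}\qquad E\bigl(1-\tfrac1k\bigr)>B .
\]
This holds because $E\frac{k-1}{k}=(32(k-1)+1)a=(32k-31)a=B$. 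This gives equality rather than strict inequality, but equality is harmless: we get $n\le n_0$, and then $n\le\lceil n_0\rceil=r_k(a)$ since $n$ is an integer. The additive constant in $r_k(a)$ was evidently chosen precisely to make this tangent-line estimate close.

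The main point to handle carefully is monotonicity: we must check that $n_0\ge An_0^{1/k}+B$ implies the same inequality for all $n\ge n_0$. This follows because $g(n)=n-An^{1/k}$ has derivative $1-\frac Ak n^{1/k-1}\ge0$ once $n\ge (A/k)^{k/(k-1)}$, and $n_0\ge T$ exceeds this threshold. So any $n>n_0$ satisfying $n\le An^{1/k}+B$ would contradict $g(n)>g(n_0)\ge B$. Hence $|V(G)|\le r_k(a)$.
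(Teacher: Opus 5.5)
Your proof is correct and follows the paper's route: you combine Caro--Wei with Pikhurko's bound to get $n\le 2(k-1)a\,n^{1/k}+(32k-31)a$, and then linearise $n^{1/k}$ using the tangent-line (Bernoulli) bound at $x_0=(2(k-1)a)^{k/(k-1)}$. The only difference is that the paper applies this bound at $n$ itself (for $n>x_0$), which yields the linear inequality $\frac{k-1}{k}(n-x_0)\le(32k-31)a$ directly, so your test point $n_0$ and the monotonicity argument for $n-An^{1/k}$ are not needed.
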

	\begin{proof}
		Let $n=|V(G)|$. If $n=0$, the conclusion is immediate.
		Hence assume that $n\ge1$, and let $m=|E(G)|$.
		By \Cref{thm:carowei}, $n^2\le a(2m+n)$, while \Cref{thm:pikhurko} gives
		$2m\le2(k-1)n^{1+\frac{1}{k}}+32(k-1)n$.
		Substituting the second bound into the first and dividing by $n$ yields
		\begin{equation}\label{eq:remainder-start}
			n\le 2(k-1)a n^{\frac{1}{k}}+(32(k-1)+1)a.
		\end{equation}
		Let $x_0=(2(k-1)a)^{\frac{k}{k-1}}$. Then $2(k-1)a\,x_0^{\frac{1}{k}}=x_0$.
		If $n\le x_0$, the conclusion follows. Otherwise, let $t =\frac{n-x_0}{x_0}>0$, so $n=x_0(1+t)$.
		The binomial theorem gives $(1+\frac{t }{k})^k\ge1+t $, and hence $(1+t )^{\frac{1}{k}}\le1+\frac{t }{k}$.
		Consequently, $2(k-1)a n^{\frac{1}{k}}\le x_0(1+\frac{t }{k})=x_0+\frac{n-x_0}{k}$.
		Using this in \eqref{eq:remainder-start}, we obtain
		$\frac{k-1}{k}(n-x_0)\le (32(k-1)+1)a$ and so
		\[
		n\le x_0+\left(32k+\frac{k}{k-1}\right)a\le r_k(a).
		\]
		This proves the lemma.
	\end{proof}
	
	Gallai and Milgram~\cite{GM} proved that every digraph has a path cover
	of size at most the independence number of its underlying graph.
	For undirected graphs, the corresponding bound follows from the standard
	endpoint-joining argument: in a path cover with the minimum number of
	components, endpoints belonging to distinct paths are nonadjacent; see,
	for example, Han~\cite{Han}. We shall use the following version, which
	keeps a prescribed spanning linear forest.
	\begin{lemma}\label{lem:forest}
		Let $G$ be a graph and let $F$ be a spanning linear forest of $G$.
		There is a spanning linear forest $F'$ of $G$ containing every edge of $F$, with at most $\alpha(G)$
		components, such that every endpoint of $F'$ is an endpoint of $F$ and every component of $F$ is a subpath of
		a component of $F'$.
	\end{lemma}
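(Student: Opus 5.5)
We argue by the standard extremal endpoint-joining argument, restricted to forests that extend $F$. Let $\mathcal{F}$ be the set of spanning linear forests $F'$ of $G$ such that $F'$ contains every edge of $F$, every endpoint of $F'$ is an endpoint of $F$, and every component of $F$ is a subpath of a component of $F'$. Then $F\in\mathcal{F}$, so $\mathcal{F}\neq\emptyset$. Choose $F'\in\mathcal{F}$ with the minimum number of components. It remains to show that $F'$ has at most $\alpha(G)$ components.

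For each component $Q$ of $F'$, pick one endpoint $x_Q$; for an isolated vertex, $x_Q$ is that vertex. We claim that the vertices $x_Q$ are pairwise nonadjacent in $G$. This gives an independent set of size equal to the number of components, which proves the bound.

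Suppose instead that $x_Q x_{Q'}\in E(G)$ for distinct components $Q,Q'$. Let $F''=F'+x_Qx_{Q'}$. Since $x_Q$ and $x_{Q'}$ have degree at most one in $F'$ and lie in different paths, $F''$ is again a spanning linear forest: the paths $Q$ and $Q'$ merge into one path and no cycle is created. Now check the three conditions defining $\mathcal{F}$.
\begin{itemize}
\item $F''$ still contains every edge of $F$, since we only added an edge.
\item Every endpoint of $F''$ was already an endpoint of $F'$, because adding an edge only raises degrees. Hence every endpoint of $F''$ is an endpoint of $F$.
\item Each component of $F$ lay inside a component of $F'$ as a subpath. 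Merging two paths by joining their ends keeps every subpath of $Q$ or $Q'$ a subpath of the merged path.
\end{itemize}
So $F''\in\mathcal{F}$ and has one fewer component than $F'$, contradicting minimality.

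One point needs care: the degree of $x_Q$ in $F'$ may be $0$ or $1$, and it must stay at most $2$ in $F''$. This holds because $x_Q$ is an endpoint, so its degree rises to at most $2$. The same applies to $x_{Q'}$. No other step presents a real obstacle.
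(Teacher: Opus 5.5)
Your proof is correct and is essentially the paper's argument. The paper phrases it as repeatedly joining two components along an edge between their endpoints until no such edge remains, while you take a forest in the admissible class with the fewest components; either way, one endpoint per component gives an independent set.
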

	\begin{proof}
		Starting from $F$, repeatedly join two distinct components whenever an endpoint of one is adjacent in $G$
		to an endpoint of the other.
		The union of the two paths and this joining edge is again a path, including when either path is a
		single vertex.
		Thus every operation preserves all existing edges, reduces the number of components by one, and
		creates no new endpoint.
		Since the graph is finite, the process stops at a spanning linear forest $F'$ whose endpoints in
		different components are pairwise nonadjacent.
		Choosing one endpoint from each component gives an independent set, so there are at most $\alpha(G)$
		components.
	\end{proof}
	A \emph{matching} is a set of edges with pairwise disjoint endpoints, and a \emph{vertex cover} is a
	set of vertices meeting every edge.
	
	\begin{theorem}[K\H{o}nig's theorem; see~\cite{BM}]
		\label{thm:konig}
		Let $G$ be a bipartite graph. Then the maximum size of a matching
		in $G$ equals the minimum size of a vertex cover of $G$.
	\end{theorem}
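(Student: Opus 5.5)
We will derive K\H{o}nig's theorem from max-flow/min-cut; the more self-contained route is the classical augmenting path argument, and that is the one we commit to. Let $G$ be bipartite with parts $X$ and $Y$. The easy inequality comes first: every vertex cover must contain at least one endpoint of each edge of a matching $M$. These endpoints are distinct for distinct edges of $M$, so every vertex cover has size at least $|M|$. Hence the maximum matching size is at most the minimum vertex cover size.

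For the reverse inequality, we fix a maximum matching $M$ and construct a vertex cover of size $|M|$.
\begin{itemize}
\item Let $U\subseteq X$ be the set of vertices of $X$ not covered by $M$.
\item Let $Z$ be the set of vertices reachable from $U$ by $M$-alternating paths. These paths start in $U$, use a non-matching edge from $X$ to $Y$, then a matching edge from $Y$ back to $X$, and so on.
\item Set $K=(X\setminus Z)\cup(Y\cap Z)$.
\end{itemize}

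The key steps, in order, are as follows.
\begin{enumerate}
\item Every vertex of $Y\cap Z$ is covered by $M$. Otherwise, the alternating path reaching it would be an augmenting path, contradicting the maximality of $M$. This is the only place where maximality is used.
\item The matching partner in $X$ of any vertex of $Y\cap Z$ also lies in $Z$, since the alternating path extends along the matching edge.
\item $K$ is a vertex cover. Take an edge $xy$ with $x\in X$ and $y\in Y$. If $x\notin Z$, then $x\in K$. If $x\in Z$, there are two cases:
  \begin{itemize}
  \item If $xy\notin M$, the alternating path to $x$ extends to $y$. This works because $x$ is reached either as a vertex of $U$ or via a matching edge, so the next edge may be a non-matching one. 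Thus $y\in Z$.
  \item If $xy\in M$, then $x\notin U$. So $x$ was reached through its matching edge, which means $y\in Z$.
  \end{itemize}
  In both cases $y\in K$.
\item $|K|\le|M|$. Every vertex of $K$ is covered by $M$:
  \begin{itemize}
  \item vertices of $Y\cap Z$ are covered by Step 1;
  \item vertices of $X\setminus Z$ are covered because $U\subseteq Z$.
  \end{itemize}
  Moreover, no edge of $M$ has both endpoints in $K$. If $y\in Y\cap Z$, its partner lies in $X\cap Z$ by Step 2, and so is not in $K$. Hence the map sending each vertex of $K$ to its $M$-edge is injective.
\end{enumerate}

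Combining $|K|\le|M|$ with the easy inequality gives equality. We expect the main obstacle to be the bookkeeping in Step 3, in particular the case analysis for matching versus non-matching edges at a vertex $x\in Z$. The cleanest fix is to define $Z$ precisely as the set of endpoints of alternating walks from $U$ whose edges alternate starting with a non-matching edge. Once that definition is fixed, each case is immediate.
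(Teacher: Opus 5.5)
Your alternating-path argument is correct: with $U$ the $M$-unsaturated vertices of $X$ and $Z$ the vertices reachable from $U$ by $M$-alternating paths, the set $K=(X\setminus Z)\cup(Y\cap Z)$ is a vertex cover of size at most $|M|$, which together with the easy inequality gives equality; this is essentially the classical proof found in Bondy--Murty. The paper does not prove this statement itself; it quotes it from that reference and uses it only in the proof of \Cref{lem:cutmatching}.
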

	
	For disjoint vertex sets $A,B$ of a digraph, a \emph{matching of arcs from $A$ to $B$} consists of
	arcs with tails in $A$, heads in $B$, and pairwise disjoint endpoints.
	\begin{lemma}\label{lem:cutmatching}
		Let $D$ be a $k$-strongly connected digraph, where $k\ge1$, and let $V(D)=A\mathbin{\dot\cup}B$ with
		$A,B\ne\emptyset$.
		Then $D$ has a matching of arcs from $A$ to $B$ of size at least $\min\{k,|A|,|B|\}$.
	\end{lemma}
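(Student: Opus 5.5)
We plan to apply König's theorem (\Cref{thm:konig}) to the bipartite graph $H$ with parts $A$ and $B$, where $x\in A$ and $y\in B$ are adjacent exactly when $xy\in A(D)$. A matching in $H$ is precisely a matching of arcs from $A$ to $B$ in $D$. So it suffices to show that every vertex cover $S$ of $H$ satisfies $|S|\ge\min\{k,|A|,|B|\}$.

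Suppose, for a contradiction, that $H$ has a vertex cover $S$ with $|S|<\min\{k,|A|,|B|\}$. Since $|S|<|A|$ and $|S|<|B|$, there are vertices $x\in A\setminus S$ and $y\in B\setminus S$. Since $|S|<k$ and $D$ is $k$-strongly connected, $D-S$ is strongly connected. Hence $D-S$ contains a directed path from $x$ to $y$.

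This path starts in $A$ and ends in $B$, so it has an arc $uv$ with $u\in A$ and $v\in B$. Both $u$ and $v$ lie outside $S$, so the edge $uv$ of $H$ is not covered by $S$, a contradiction. Therefore every vertex cover has size at least $\min\{k,|A|,|B|\}$, and König's theorem gives a matching of that size.

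There is no serious obstacle here. The only point to check is that $A\setminus S$ and $B\setminus S$ are both nonempty, and this is exactly why the bound includes $|A|$ and $|B|$ inside the minimum.
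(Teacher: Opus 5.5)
Your proposal is correct and follows essentially the same route as the paper: apply König's theorem to the bipartite graph of $A\to B$ arcs, use $|S|<|A|,|B|$ to get surviving vertices on both sides, and use the first $A$-to-$B$ crossing arc of a path in the strongly connected $D-S$ to contradict $S$ being a cover. The only difference is presentational: the paper phrases it as a maximum matching of size $m$ yielding a cover of size $m$, while you bound every vertex cover directly.
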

	\begin{proof}
		Form the bipartite graph $H$ with parts $A,B$ in which $uv$ is an edge precisely when $u\in A$,
		$v\in B$, and $uv\in A(D)$.
		If its maximum matching had size
		$m<\min\{k,|A|,|B|\}$, \Cref{thm:konig} would give a
		vertex cover $S$ of size $m$.
		Since $|S|<|A|,|B|$, both $A\setminus S$ and $B\setminus S$ are nonempty, and the cover property
		says that no arc goes from the former to the latter in $D-S$.
		Any directed path from $A\setminus S$ to $B\setminus S$ would have a first arc crossing this
		partition, so no such path exists.
		Thus $D-S$ is not strongly connected, contradicting $|S|<k$.
	\end{proof}
	
	We write $\Prob$ for probability and $\E$ for expectation,
	while $A^c$ denotes the complementary event of $A$.
	We shall use Markov's inequality in the form
	$
	\Prob[X\ge t]\le\frac{\E[X]}{t}
	$
	for every nonnegative random variable $X$ and every $t>0$.
	A Bernoulli random variable takes values $0$ and $1$.
	
	A finite random vector $(Z_i)_{i\in I}$ is
	\emph{negatively associated} if, for disjoint subsets
	$I_1,I_2\subseteq I$ and coordinatewise nondecreasing
	functions $f,g$,
	\[
	\E\!\left[
	f\bigl((Z_i)_{i\in I_1}\bigr)
	g\bigl((Z_i)_{i\in I_2}\bigr)
	\right]
	\le
	\E\!\left[f\bigl((Z_i)_{i\in I_1}\bigr)\right]\,
	\E\!\left[g\bigl((Z_i)_{i\in I_2}\bigr)\right],
	\]
	whenever the expectations exist.
	Here coordinatewise nondecreasing means that increasing
	a coordinate while fixing the others cannot decrease
	the function value.
	The following three standard properties of negative
	association are due to Joag-Dev and Proschan~\cite{JDP}.
	See also Dubhashi and Ranjan~\cite{DR}.
	
	\begin{proposition}[Joag-Dev and Proschan~\cite{JDP}]
		\label{prop:negative-association}\leavevmode
		\begin{enumerate}
			\renewcommand{\labelenumi}{\textup{(\roman{enumi})}}
			\item A finite $0$--$1$ random vector with at most
			one coordinate equal to $1$ is negatively associated.
			
			\item The concatenation of finitely many mutually
			independent, negatively associated random vectors
			is negatively associated.
			
			\item Let $(Z_i)_{i\in I}$ be negatively associated,
			let $I_1,\ldots,I_t\subseteq I$ be pairwise disjoint,
			and let $f_1,\ldots,f_t$ be coordinatewise
			nondecreasing functions.
			Then the random vector
			$
			\bigl(f_j((Z_i)_{i\in I_j})\bigr)_{j\in[t]}
			$
			is negatively associated.
		\end{enumerate}
	\end{proposition}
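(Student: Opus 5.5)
The three parts can be handled separately, each straight from the definition of negative association. Throughout, $I_1,I_2$ denote disjoint index sets and $f,g$ coordinatewise nondecreasing functions. For (ii) I would work with bounded functions, or with the finitely supported vectors arising in our applications, so that every conditional expectation used exists.

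For (i), adding a constant to $f$ or to $g$ does not change $\E[fg]-\E[f]\E[g]$. So I normalise to $f(\mathbf 0)=g(\mathbf 0)=0$, where $\mathbf 0$ is the zero vector on the relevant coordinates.
\begin{itemize}
\item The vector $(Z_i)_{i\in I}$ takes only the zero vector or a unit vector $e_i$. Hence each restriction $(Z_i)_{i\in I_1}$ is either zero or a unit vector.
\item By monotonicity, $f\bigl((Z_i)_{i\in I_1}\bigr)\ge f(\mathbf 0)=0$ always, and likewise $g\ge0$.
\item At most one coordinate of $Z$ equals $1$, and $I_1\cap I_2=\emptyset$. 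So at least one of $(Z_i)_{i\in I_1}$, $(Z_i)_{i\in I_2}$ is the zero vector, and the product $fg$ vanishes identically.
\end{itemize}
Therefore $\E[fg]=0\le\E[f]\,\E[g]$, since both factors are nonnegative.

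For (ii), it suffices by induction to treat two independent negatively associated vectors $X=(X_i)_{i\in A}$ and $Y=(Y_i)_{i\in B}$, with $A\cap B=\emptyset$. Write $I_1=A_1\cup B_1$ and $I_2=A_2\cup B_2$, where $A_j\subseteq A$ and $B_j\subseteq B$.
\begin{itemize}
\item Fix a value $y$ of $Y$. The maps $x\mapsto f(x_{A_1},y_{B_1})$ and $x\mapsto g(x_{A_2},y_{B_2})$ are coordinatewise nondecreasing on the disjoint sets $A_1,A_2$.
\item By independence, conditioning on $Y=y$ leaves the law of $X$ unchanged. Negative association of $X$ then gives
\[
\E[fg\mid Y]\le \E[f\mid Y]\,\E[g\mid Y].
\]
\item Set $\phi(y_{B_1})=\E_X[f(X_{A_1},y_{B_1})]$ and $\psi(y_{B_2})=\E_X[g(X_{A_2},y_{B_2})]$. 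These are averages of nondecreasing functions, so they are nondecreasing, and they depend only on the disjoint coordinate sets $B_1,B_2$.
\item Negative association of $Y$ gives $\E[\phi\psi]\le\E[\phi]\E[\psi]=\E[f]\E[g]$.
\end{itemize}
Taking expectations in the displayed inequality and combining with the last step yields $\E[fg]\le\E[f]\E[g]$.

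For (iii), let $W_j=f_j\bigl((Z_i)_{i\in I_j}\bigr)$. Take disjoint $J_1,J_2\subseteq[t]$ and nondecreasing $F,G$ acting on $(W_j)_{j\in J_1}$ and $(W_j)_{j\in J_2}$ respectively.
\begin{itemize}
\item The composite $F\bigl((f_j)_{j\in J_1}\bigr)$ is a coordinatewise nondecreasing function of $Z$ restricted to $\bigcup_{j\in J_1}I_j$, since a composition of nondecreasing maps is nondecreasing.
\item The same holds for $G$ on $\bigcup_{j\in J_2}I_j$.
\item These two unions are disjoint because the $I_j$ are pairwise disjoint.
\end{itemize}
So the defining inequality for $Z$ applies directly and gives the negative association inequality for $(W_j)_{j\in[t]}$.

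None of these steps is a genuine obstacle. The only point needing care is the conditioning in (ii): one must check that integrability holds and that the averaged functions $\phi,\psi$ remain monotone. This is immediate in the finite discrete setting in which the proposition is applied.
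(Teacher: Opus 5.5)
Your proof is correct. The paper does not prove this proposition: it quotes these properties from Joag-Dev and Proschan, with a pointer to Dubhashi and Ranjan. Your three arguments are the standard ones from that literature.
\begin{itemize}
\item In (i), the normalisation $f(\mathbf 0)=g(\mathbf 0)=0$ followed by the observation that $fg$ vanishes identically is the usual zero--one lemma argument.
\item Part (ii) is the conditioning argument for the union of independent negatively associated families.
\item Part (iii) is the composition argument.
\end{itemize}

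The only caveat is the one you already flag. The statement covers arbitrary real-valued vectors and unbounded $f,g$ (``whenever the expectations exist''). To reach that generality in (ii), add a truncation step. Replace $f,g$ by $\max\{\min\{f,n\},-n\}$ and $\max\{\min\{g,n\},-n\}$, which remain nondecreasing and are bounded. Then let $n\to\infty$ using dominated convergence, which applies because $|f_ng_n|\le|fg|$. For the $0$--$1$ vectors to which the paper applies the proposition, your argument is already complete as written.
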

	
	If $Z$ is a sum of independent or negatively associated
	Bernoulli random variables with mean $\mu=\E[Z]$, then
	the Chernoff bound gives
	$
	\Prob\!\left[Z\le\frac{\mu}{2}\right]\le e^{-\mu/8}.
	$
	See~\cite{AS} for the independent case and~\cite{DR}
	for the negatively associated case.
	The next lemma applies this bound together with
	\Cref{prop:negative-association}.
	
	\Needspace{9\baselineskip}
	\begin{lemma}\label{lem:part-selection}
		Let $M=\{u_iv_i:i\in[r]\}$ be a matching from a set $A$
		to a disjoint set $B$, and partition $A$ and $B$
		separately into parts.
		In each part select at most one vertex, independently
		of all other parts, and let $p_i,q_i$ be the probabilities
		that $u_i,v_i$ are selected, respectively.
		Call an edge $u_iv_i\in M$ \emph{surviving} if both
		$u_i$ and $v_i$ are selected.
		If $Y$ is the number of surviving edges and
		$\mu=\sum_{i=1}^r p_iq_i$, then
		\[
		\Prob\!\left[Y\le\frac{\mu}{2}\right]\le e^{-\mu/8}.
		\]
		Moreover, the surviving edges meet distinct parts
		on each side.
	\end{lemma}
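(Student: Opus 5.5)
We express $Y$ as a sum of negatively associated Bernoulli variables and then apply the Chernoff bound for the negatively associated case.

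\textbf{Indicators for each part.} Let the parts of $A$ be $A_1,\dots,A_s$ and the parts of $B$ be $B_1,\dots,B_t$. For each part $A_j$, form the $0$--$1$ vector $(X_u)_{u\in A_j}$, where $X_u=1$ exactly when $u$ is selected. At most one coordinate equals $1$, so by \Cref{prop:negative-association}(i) this vector is negatively associated. The same holds for each $B_j$. The selections in different parts are independent, so by \Cref{prop:negative-association}(ii) the concatenated vector $(X_w)_{w\in A\cup B}$ is negatively associated.

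\textbf{Indicators for the edges.} For each $i\in[r]$, let $Z_i=X_{u_i}X_{v_i}$. This is a coordinatewise nondecreasing function of the coordinates indexed by $I_i=\{u_i,v_i\}$. Since $M$ is a matching with $A\cap B=\emptyset$, the sets $I_1,\dots,I_r$ are pairwise disjoint. Hence \Cref{prop:negative-association}(iii) shows that $(Z_i)_{i\in[r]}$ is negatively associated. Moreover, $u_i\in A$ and $v_i\in B$ lie in different parts, so $X_{u_i}$ and $X_{v_i}$ are independent. Therefore $\E[Z_i]=p_iq_i$, and $Y=\sum_i Z_i$ has mean $\mu$. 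The stated Chernoff bound for sums of negatively associated Bernoulli variables now gives $\Prob[Y\le\mu/2]\le e^{-\mu/8}$. If $\mu=0$, the bound is trivial because the right-hand side is $1$.

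\textbf{The final assertion.} Each part contains at most one selected vertex. If two surviving edges $u_iv_i$ and $u_{i'}v_{i'}$ with $i\ne i'$ had $u_i$ and $u_{i'}$ in the same part of $A$, then $u_i$ and $u_{i'}$ would be two distinct selected vertices of that part, since $M$ is a matching. This is impossible, and the same argument applies on the $B$ side.

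The only step needing care is applying (iii): the index sets $I_i$ must be disjoint and each $f_i$ must be monotone. Both hold because $M$ is a matching and the product of indicators is nondecreasing. Everything else is direct.
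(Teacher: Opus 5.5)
Your proposal is correct and follows the paper's proof step for step. You obtain negative association of the per-part selection indicators from \Cref{prop:negative-association}(i), concatenate them via (ii), and apply (iii) to the products over the disjoint pairs $\{u_i,v_i\}$; you then use independence of $u_i$ and $v_i$, which lie in different parts, to get $\E[Y]=\mu$ before invoking the Chernoff bound for negatively associated variables. Your explicit treatment of $\mu=0$ and the more detailed argument for the final assertion are harmless additions.
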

	
	\begin{proof}
		Let $X_i$ indicate that $u_i$ is selected and let $Z_i$
		indicate that $v_i$ is selected.
		Within each part the full vector of vertex-selection
		indicators has at most one $1$, so it is negatively
		associated by
		\Cref{prop:negative-association}\textup{(i)}.
		Since the selections in different parts are independent,
		the full vector of all selection indicators is negatively
		associated by
		\Cref{prop:negative-association}\textup{(ii)}.
		
		For each $i$, let $Y_i$ be the indicator that $u_iv_i$
		survives. Then $Y_i=X_iZ_i$.
		Because $M$ is a matching, the coordinate pairs
		$\{X_i,Z_i\}$ are disjoint for distinct $i$, although
		different coordinates may correspond to vertices in
		the same part.
		Each product is nondecreasing in its two coordinates,
		so \Cref{prop:negative-association}\textup{(iii)}
		implies that $(Y_i)_{i\in[r]}$ is negatively associated.
		
		Since $u_i$ and $v_i$ belong to distinct parts,
		their selections are independent.
		Thus $\E[Y_i]=p_iq_i$ and
		$\E[Y]=\sum_{i=1}^r p_iq_i=\mu$.
		The Chernoff bound therefore gives
		$
		\Prob\!\left[Y\le\frac{\mu}{2}\right]\le e^{-\mu/8}.
		$
		Finally, the surviving edges meet distinct parts
		on each side, since each part supplies at most
		one selected vertex.
	\end{proof}
	
	\section{A path-selection lemma}\label{sec:selection}
	Let $a\ge2$ and $t\ge4$ be integers with $t\not\equiv0\pmod{3}$, let $1\le\ell\le2a$ be an integer, and let
	\[
	h=2^{16}t^2a\ell^2,\qquad d_0=\frac{h}{4t\ell}.
	\]
	Let $D$ satisfy $\al(D)\le a$, choose $R\subseteq V(D)$, and suppose that $D'=D-R$ is $h$-strongly
	connected, and let $U=V(D')$.
	Let $T\subseteq R$ be fixed before any random choices, with $|T|\le2a$ and $d_U^+(v),d_U^-(v)\ge h$
	for every $v\in T$.
	Assume that a family $\calC$ of vertex-disjoint $t$-cycles in $G_2(D)$ covers exactly $V(D')$.
	
	For each $C\in\calC$, choose an edge $e_C\in E(C)$, called the \emph{retained edge}, and delete its
	two endpoints from the cycle $C$ to obtain a path $P_C$ on $t-2$ vertices,
	called its \emph{complementary path}.
	Choose one of the two directions of $P_C$ and a colour in $[\ell]$, with choices independent between
	different cycles.
	For each colour $j$, let $\calP_j$ be its family of oriented paths, and let $x_P$ and $y_P$ be the
	first and last vertices of an oriented path $P$, respectively.
	Let $D_j$ be the auxiliary digraph whose vertices are $v_P$, one for each $P\in\calP_j$, with arcs defined by
	\[
	v_Pv_{P'}\in A(D_j)
	\quad\Longleftrightarrow\quad
	P\ne P'\ \hbox{and}\ y_Px_{P'}\in A(D).
	\]
	A directed path of $D_j$ expands to a directed path of $D$ by traversing each represented path in full and, for each pair of consecutive represented paths $P,P'$, using the arc $y_Px_{P'}$ of $D$.
	
	For $v\in V(D')\cup T$, let $d_j^+(v)$ count the paths $P\in\calP_j$ with $vx_P\in A(D)$, and let
	$d_j^-(v)$ count those with $y_Pv\in A(D)$.
	When $v\in V(D')$, let $C$ be the unique cycle in $\calC$ containing $v$, and exclude the path arising from $C$ from these counts. When $v\in T$, no such exclusion is made.
	\begin{lemma}\label{lem:selection}
		With the notation and assumptions above, the retained edges,
		directions, and colours can be chosen such that, for every
		$j\in[\ell]$,
		\[
		\kappa(D_j)\ge a,\qquad
		d_j^+(v),d_j^-(v)\ge a
		\quad\text{for all }v\in V(D')\cup T.
		\]
	\end{lemma}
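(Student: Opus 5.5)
The plan is to make all choices uniformly at random, independently over the cycles $C\in\calC$: the retained edge $e_C$ is uniform among the $t$ edges of $C$, the direction of $P_C$ is uniform, and the colour is uniform in $[\ell]$. The first step is a one-vertex computation. A fixed $w\in V(C)$ is an endpoint of $P_C$ exactly when $e_C$ is one of the two edges of $C$ at distance one from $w$. Hence $w$ becomes $x_{P_C}$ of colour $j$ with probability $1/(t\ell)$, and the same holds for $y_{P_C}$. For each colour $j$ and each side ($x$ or $y$), every cycle contributes at most one selected vertex. So the cycles form exactly the parts of \Cref{lem:part-selection}, and for different cycles the selections are independent.

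\textbf{Degree conditions.} Fix $v\in V(D')\cup T$. Since $D'$ is $h$-strongly connected (and by the hypothesis on $T$), $v$ has at least $h$ out-neighbours in $U$, and at most $t-1$ of them lie on the cycle containing $v$. Consider the out-neighbours of $v$ that would serve as $x_P$; grouping them by cycle, I apply \Cref{lem:part-selection}, or directly Chernoff with negative association. This gives $\E[d_j^+(v)]\ge (h-t)/(t\ell)\ge 2d_0$, so $d_j^+(v)\ge d_0\gg a$ except with probability $e^{-d_0/4}$. Here $d_0=h/(4t\ell)=2^{14}ta\ell$.

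A plain union bound over all vertices fails, because $|V(D)|$ is unbounded. I would therefore handle the degrees by the alteration idea suggested by Caro--Wei. Call a vertex \emph{bad} if its degree in some colour is small. Using $\al(D)\le a$, the bad vertices cannot contain too many vertices that are pairwise non-$2$-cycle-adjacent. I would try to show that a bad vertex $v$ can always be repaired by re-choosing $e_C$ for a cycle $C$ containing a $2$-cycle neighbour of $v$. If this turns out too delicate, the fallback is a Lov\'asz local lemma applied to a truncated event. For each $v$, fix $h$ neighbours in $\lceil h/t\rceil$ distinct cycles and only count those, so each degree event depends on a bounded number of cycles. The difficulty remaining with the fallback is bounding how many events share a cycle.

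\textbf{Connectivity.} Assume the degree conditions hold in every colour. Suppose $\kappa(D_j)<a$, so there is a separation $(X,S,Y)$ of $V(D_j)$ with $|S|<a$, $X,Y\ne\emptyset$, and no arc from $X$ to $Y$. If $|X|\le d_0/2$, this contradicts the out-degree bound $\ge d_0>|X|+|S|$, and symmetrically for small $Y$. So both $X$ and $Y$ are large. Pull the separation back to $D'$: let $A$ be the vertex set of the paths in $X$ together with all vertices of $U$ not in paths of colour $j$ or in $S$ (placed on one side according to a rule fixed before revealing), and let $B$ be the rest minus $S$. Then \Cref{lem:cutmatching} in $D'$, after deleting the at most $a(t-2)$ vertices of $S$-paths, gives a matching of arcs from $A$ to $B$ of size at least $h-at$. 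By \Cref{lem:part-selection}, at least $\mu/2$ of these survive as $y_P\to x_{P'}$ arcs of colour $j$ with $P\in X$ and $P'\in Y$, except with probability $e^{-\mu/8}$, where $\mu\ge (h-at)/(t\ell)^2\ge 2^{15}a$.

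The main obstacle is the union bound over separations: there are far too many choices of $(X,S,Y)$. The plan is to index the bad event not by $(X,S,Y)$ but by a small certificate, namely the set $S$ (fewer than $a$ paths) together with a short description of $A$. I would obtain this description from \Cref{lem:forest} and $\al(D)\le a$ applied to $G_2(D)[A]$: a path cover of $G_2(D)[A]$ extending the complementary paths has at most $a$ components, so $A$ is determined up to $2^{O(a\log(t\ell))}$-type data once its "frontier" is specified. This would make the number of relevant events at most $e^{O(a t\ell\log)}$, which is beaten by $e^{-\mu/8}$ thanks to the factor $2^{16}$ in $h$. Finally, a union bound over the $\ell$ colours and over these certificate events, together with the degree step, yields a choice satisfying all requirements simultaneously.
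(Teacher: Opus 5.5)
Your proposal has genuine gaps in both main steps. The union bound fails in each, and the idea that would make it work is missing.

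\textbf{Degree conditions.} You leave the alteration and local-lemma ideas open, and neither works as stated. Re-choosing a single $e_C$ changes each $d_j^\sigma(v)$ by at most one and can damage other vertices. A single cycle can lie in the neighbourhoods of unboundedly many vertices, so the dependency degree in the local lemma is unbounded. What you are missing is that the uniform tail bound is the wrong thing to sum. Your own Chernoff estimate gives $\Prob[d_j^\sigma(v)<d_0]\le e^{-d_U^\sigma(v)/(16t\ell)}$, which decays in the actual degree of $v$. Every $G_2(D)$-neighbour of $v$ in $U$ is both an in- and an outneighbour, so Caro--Wei gives $\sum_{v\in U}1/(d_U^\sigma(v)+1)\le a$. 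For $d\ge h$ and $\beta=16t\ell$ one also has $e^{-d/\beta}\le (h+1)e^{-h/\beta}/(d+1)$. Hence the expected number of failures over $v\in U\cup T$, colours and signs is at most $2\ell\bigl(a(h+1)+2a\bigr)e^{-h/(16t\ell)}$, which is tiny. This is how the paper handles the unbounded number of vertices.

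\textbf{Connectivity.} This gap is more serious. Your sets $A,B$, and hence the matching from \Cref{lem:cutmatching}, are defined from the random separation. So \Cref{lem:part-selection} can only be applied to candidates fixed in advance, followed by a union bound over all candidates, and your certificate does not make that family small.

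First, $S$ alone ranges over all sets of fewer than $a$ vertices of $D_j$, and their number is unbounded in terms of $h$. Second, \Cref{lem:forest} tells you that $G_2(D)[A]$ has a path cover with at most $a$ paths, but this holds for every $A\subseteq U$, since $\alpha(G_2(D)[A])\le a$ always. It therefore carries no information about $A$, and describing the cover or its ``frontier'' needs unboundedly many bits. Third, even $e^{O(a\log(t\ell))}$ certificates would not be beaten by the guaranteed bound $e^{-\mu/8}\le e^{-2^{12}a}$. The $2^{16}$ in $h$ is an absolute constant while $t,\ell$ are unbounded, so the certificate count must be $e^{O(a)}$ with a small absolute constant.

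The paper achieves this with an adaptive exploration. It reveals the choices one cycle at a time and follows a word of length $2a-1$ over $\{\mathrm L,\mathrm R,\mathrm X\}$. The symbol $\mathrm L$ records the last vertex of a path in $L'$ into $I_L$, and $\mathrm R$ records the first vertex of a path in $R'$ into $I_R$. Since there are no arcs from $L'$ to $R'$, $I_L\cup I_R$ is independent in $G_2(D)$, hence has size at most $a$. The separator is only counted, never enumerated.

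The hypothesis $t\not\equiv0\pmod 3$, which you never use, guarantees that each examined cycle yields a record with probability at least $1/(2t\ell)$. So examining $N_0=h/(16t)$ cycles without exhausting the word is very unlikely. When the exploration stops, every unexamined cycle lies in $A_L$ or $A_R$. This forces its colour-$j$ path into $L'\cup X$ or $R'\cup X$ respectively, and fixes a partition $V_L\cup V_R$ of $D'-W$ from the revealed choices alone. The choices on unexamined cycles are still fresh. Together with \Cref{lem:cutmatching} and \Cref{lem:part-selection}, or a degree argument when a side has fewer than $h/4$ vertices, they produce at least $a$ vertex-disjoint witnesses, which defeats every $X$ with $|X|<a$. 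The final union bound is only over $\ell\cdot 3^{2a-1}$ words.
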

	\begin{proof}
		Let $G=G_2(D)$. Independently for each $C\in\calC$, choose its retained edge,
		path direction, and colour uniformly from the $2t\ell$ possibilities, and denote
		this choice by $\xi_C$. For $\sigma\in\{+,-\}$, we use $d_U^\sigma(v)$ and
		$d_j^\sigma(v)$ for the corresponding $+$ or $-$ quantities defined above.
		\begin{claim}\label{cl:neighbours}
			With probability greater than $\frac{15}{16}$, every $d_j^\sigma(v)$, for $j\in[\ell]$,
			$v\in V(D')\cup T$, and $\sigma\in\{+,-\}$, is at least $d_0$.
		\end{claim}
		\begin{proof}
			Fix $v\in V(D')\cup T$, $j\in[\ell]$, and $\sigma\in\{+,-\}$, and let
			$d=d_U^\sigma(v)\ge h$. A fixed vertex $z$ of a cycle $C$ is an endpoint of $P_C$
			for exactly two choices of the retained edge. For each resulting path, exactly one
			orientation makes $z$ its first vertex and exactly one makes $z$ its last vertex.
			Thus $z$ is the relevant endpoint of a colour-$j$ path with probability $\frac{1}{t\ell}$,
			where the relevant endpoint is the first vertex when $\sigma=+$ and the last vertex
			when $\sigma=-$. If $C$ does not contain $v$ and contains $s$ outneighbours of
			$v$ when $\sigma=+$, or $s$ inneighbours when $\sigma=-$, its contribution to
			$d_j^\sigma(v)$ is a Bernoulli variable of mean $\frac{s}{t\ell}$.
			Omitting the cycle containing $v$, when necessary, loses at most $t$ such
			neighbours. The remaining contributions are independent, with total mean
			$\mu\ge\frac{d-t}{t\ell}\ge \frac{d}{2t\ell}$, since $d\ge h=2^{16}t^2a\ell^2\ge2t$.
			Since $\frac{d}{4t\ell}\le\frac{\mu}{2}$, the Chernoff bound gives
			\begin{equation}\label{eq:vertex-tail}
				\Prob\!\left[d_j^\sigma(v)<\frac{d}{4t\ell}\right]\le e^{-\frac{d}{16t\ell}}.
			\end{equation}
			
			We next sum these bounds over $V(D')$ using the Caro--Wei inequality.
			Let $\beta =16t\ell$. Every neighbour of $v$ in $G[V(D')]$ is both an
			inneighbour and an outneighbour of $v$ in $D'$, so
			\Cref{thm:carowei} gives
			\[
			\sum_{v\in V(D')}\frac{1}{d_U^\sigma(v)+1}
			\le
			\sum_{v\in V(D')}\frac{1}{d_{G[V(D')]}(v)+1}
			\le a.
			\]
			Since $h=2^{16}t^2a\ell^2\ge16t\ell=\beta $, the inequality $1+x\le e^x$ gives, for $d\ge h$,
			\[
			d+1=(h+1)\left(1+\frac{d-h}{h+1}\right)
			\le(h+1)e^{\frac{d-h}{\beta }}.
			\]
			Consequently,
			\[
			\sum_{v\in V(D')}e^{-\frac{d_U^\sigma(v)}{\beta }}
			\le(h+1)e^{-\frac{h}{\beta }}\sum_{v\in V(D')}\frac{1}{d_U^\sigma(v)+1}
			\le a(h+1)e^{-\frac{h}{\beta }}.
			\]
			Let $E_0$ be the event in the claim, and let
			$x=\frac{h}{\beta}=2^{12}ta\ell\ge2^{15}$.
			Since $d_U^\sigma(v)\ge h$, we have
			$d_0\le \frac{d_U^\sigma(v)}{4t\ell}$.
			For each fixed $j\in[\ell]$ and $\sigma\in\{+,-\}$,
			combining \eqref{eq:vertex-tail} with the above bound gives
			\[
			\sum_{v\in V(D')}\Prob[d_j^\sigma(v)<d_0]
			\le a(h+1)e^{-x}.
			\]
			Each vertex of $T$ contributes at most $e^{-x}$, so
			$|T|\le2a$ and a union bound over all colours and both signs yield
			\[
			\begin{aligned}
				\Prob[E_0^c]
				\le2\ell\bigl(a(h+1)+|T|\bigr)e^{-x}
				\le2a\ell(h+3)e^{-x}.
			\end{aligned}
			\]
			Since $h\ge3$ and $h=16t\ell x$, we have
			\[
			2a\ell(h+3)
			\le4a\ell h
			=64at\ell^2x
			\le64x^3,
			\]
			where the last inequality uses $at\ell\le x$ and $\ell\le x$.
			Moreover, $x\ge2^{15}>64\cdot384$ implies
			$
			e^{x/2}\ge\frac{(x/2)^4}{4!}
			=\frac{x^4}{384}\ge64x^3.
			$
			Therefore,
			\begin{equation}\label{eq:neighbour-failure}
				\Prob[E_0^c]\le64x^3e^{-x}\le e^{-x/2}<\frac{1}{16}.
			\end{equation}
			This proves the claim.
		\end{proof}
		
		On $E_0$, every $D_j$ is nonempty, since $D'\ne\emptyset$ and $d_0>0$.
		For $v_P\in V(D_j)$, its outdegree is $d_j^+(y_P)$ and its indegree is
		$d_j^-(x_P)$, since the omitted cycle is precisely the cycle supplying $P$.
		Since $d_0=\frac{h}{4t\ell}=2^{14}ta\ell>a$, we have $|V(D_j)|\ge d_0+1>a$.
		It remains to rule out a partition
		$V(D_j)=L'\mathbin{\dot\cup}X\mathbin{\dot\cup}R'$ with $L',R'\ne\emptyset$,
		$|X|<a$, and no arc from $L'$ to $R'$; call this a \emph{bad separation}.
		Indeed, if $D_j-X$ is not strongly connected, the vertices reachable from a
		vertex that cannot reach all of $D_j-X$ form a possible set $L'$, and the
		remaining vertices form $R'$.
		
		Fix a colour $j$ and an ordering of $\calC$.
		We examine the choices $\xi_C$ one cycle at a time, following the fixed ordering. Each choice is fixed in advance and is not changed when its cycle is examined.
		To describe a possible bad separation, we build disjoint sets $I_L,I_R$, initially
		empty, whose union is independent in $G$.
		The vertices in $I_L$ will be last vertices $y_P$ of paths $P$ with
		$v_P\in L'$, and those in $I_R$ will be first vertices $x_P$ of paths $P$
		with $v_P\in R'$.
		For the current sets $I_L,I_R$, let
		\[
		\begin{split}
			A_L&=\{v\in V(G):N_G[v]\cap I_L\ne\emptyset,\ N_G[v]\cap I_R=\emptyset\},\\
			A_R&=\{v\in V(G):N_G[v]\cap I_R\ne\emptyset,\ N_G[v]\cap I_L=\emptyset\}.
		\end{split}
		\]
		The sets $A_L,A_R$ are disjoint and are redefined whenever $I_L$ or $I_R$ changes.
		We examine only a cycle not previously examined and contained in neither $A_L$
		nor $A_R$. A cycle contained in one of these sets at an earlier stage may
		therefore become available for examination later, as the sets change.
		For the complementary path $P$ of a cycle being examined, the choices that
		we need to record are those satisfying
		\begin{equation}\label{eq:record-condition}
			P\in\calP_j,\qquad x_P\notin A_R,\qquad y_P\notin A_L.
		\end{equation}
		\begin{claim}\label{cl:effective}
			If a cycle $C$ is contained in neither $A_L$ nor $A_R$, at least one of its
			$2t\ell$ choices satisfies \eqref{eq:record-condition}.
		\end{claim}
		
		\begin{proof}
			Let $C=v_0v_1\cdots v_{t-1}v_0$, with indices taken modulo $t$.
			The possible endpoint pairs of a complementary path are precisely
			$\{v_i,v_{i+3}\}$ for $i\in\mathbb Z_t$.
			Let $H$ be the graph on $V(C)$ whose edges are these pairs.
			Since $t\not\equiv0\pmod{3}$, we have $\gcd(3,t)=1$, so $H$ is connected.
			
			Suppose that no complementary path can be oriented with its first
			vertex outside $A_R$ and its last vertex outside $A_L$.
			For each edge $zw\in E(H)$, both orientations then fail these
			requirements. As $A_L\cap A_R=\emptyset$, this implies that
			$\{z,w\}\subseteq A_L$ or $\{z,w\}\subseteq A_R$.
			The connectedness of $H$ now implies that $V(C)$ is contained
			in $A_L$ or in $A_R$, a contradiction.
			Choose a suitable orientation and assign colour $j$.
			This gives \eqref{eq:record-condition}.
		\end{proof}
		
		Let $r_0=2a-1$ and let $N_0=\frac{h}{16t}=2^{12}ta\ell^2$.
		A word of length $r_0$ over $\{\mathrm{L},\mathrm{R},\mathrm{X}\}$ will specify how
		to proceed whenever \eqref{eq:record-condition} holds.
		Fix such a word, start with $I_L=I_R=\emptyset$ and a counter $\eta_X=0$, and always
		examine the first available cycle in the fixed ordering that is contained in
		neither $A_L$ nor $A_R$.
		If its path fails \eqref{eq:record-condition}, make no change.
		Otherwise read the next symbol of the word: $\mathrm{L}$ adds $y_P$ to $I_L$,
		$\mathrm{R}$ adds $x_P$ to $I_R$, and $\mathrm{X}$ increases $\eta_X$ by one.
		Discard the word if the indicated addition destroys independence, if $\eta_X=a$, or
		if a further symbol is needed after all $r_0$ symbols have been used.
		After each examination, apply the preceding rules. Unless the word is
		discarded, repeat the procedure until either $N_0$ cycles have been examined
		or every unexamined cycle is contained in $A_L$ or in $A_R$.
		The next cycle and every decision in this procedure depend only on the fixed
		word and the choices on cycles already examined.
		
		A bad separation \emph{agrees} with the procedure if, whenever a symbol is
		used for a path $P$, that symbol is $\mathrm{L}$, $\mathrm{R}$, or $\mathrm{X}$
		according as $v_P\in L'$, $v_P\in R'$, or $v_P\in X$.
		
		\begin{claim}\label{cl:records}
			Every bad separation agrees with the procedure for some word that is not
			discarded. There are $3^{2a-1}$ possible words.
		\end{claim}
		
		\begin{proof}
			Fix a bad separation. Whenever \eqref{eq:record-condition} holds for a path
			$P$, choose the next symbol to be $\mathrm{L}$, $\mathrm{R}$, or $\mathrm{X}$
			according as $v_P\in L'$, $v_P\in R'$, or $v_P\in X$.
			
			Suppose first that $v_P\in L'$, and let $y=y_P$. Every vertex of $I_R$ is
			the first vertex of an earlier path represented in $R'$. An edge of $G$
			joining $y$ to such a vertex would give an arc from $L'$ to $R'$ in $D_j$,
			contrary to the definition of a bad separation. Thus $y$ has no neighbour in $I_R$.
			Moreover, since the current cycle has not been examined before and the cycles
			of $\calC$ are vertex-disjoint, we have $y\notin I_R$. Hence
			$N_G[y]\cap I_R=\emptyset$. Together with $y\notin A_L$, this implies
			$N_G[y]\cap I_L=\emptyset$. Thus adding $y$ to $I_L$ preserves independence.
			
			Similarly, suppose that $v_P\in R'$, and let $x=x_P$. If $x$ were adjacent
			in $G$ to a vertex of $I_L$, the corresponding arc would go from $L'$ to
			$R'$ in $D_j$, again a contradiction. Since the current cycle has not been examined before,
			$x\notin I_L$, and hence $N_G[x]\cap I_L=\emptyset$. Together with
			$x\notin A_R$, this gives $N_G[x]\cap I_R=\emptyset$. Thus adding $x$ to
			$I_R$ also preserves independence.
			
			Each symbol $\mathrm{L}$ or $\mathrm{R}$ therefore adds a new vertex to the
			independent set $I_L\cup I_R$, so there are at most $a$ such symbols.
			Each symbol $\mathrm{X}$ corresponds to a different vertex $v_P\in X$, so
			there are at most $a-1$ such symbols. Hence at most
			$a+(a-1)=2a-1=r_0$ symbols are used, and the word is never discarded.
			If fewer than $r_0$ symbols are used, fill the remaining positions with
			$\mathrm{X}$. These additional symbols are never read and do not affect
			the procedure. Since each of the $r_0=2a-1$ positions has three possible
			symbols, there are $3^{2a-1}$ possible words.
		\end{proof}
		
		\begin{claim}\label{cl:overflow}
			For a fixed word, the probability that $N_0$ cycles are examined without discarding
			the word is at most $e^{-\frac{h}{256t^2\ell}}$.
		\end{claim}
		\begin{proof}
			The next cycle is determined only by the choices on the
			cycles already examined, so each of its $2t\ell$ possible
			choices still has conditional probability $\frac{1}{2t\ell}$.
			By \Cref{cl:effective}, the conditional probability that
			\eqref{eq:record-condition} holds is therefore at least
			$p=\frac{1}{2t\ell}$.
			
			Let $Z_i$ indicate that this condition holds at the $i$th
			examination.
			If the procedure ends before $N_0$ examinations, extend
			the sequence to length $N_0$ using independent Bernoulli
			variables of mean $p$, independent of all original choices.
			The conditional success probability is at least $p$ at
			every position.
			Taking conditional expectations, the same bound holds
			when conditioning only on the earlier indicators:
			$
			\Prob[Z_i=1\mid Z_1,\ldots,Z_{i-1}]\ge p.
			$
			Since $Z_i$ takes values $0$ and $1$, we have
			\[
			\E[2^{-Z_i}\mid Z_1,\ldots,Z_{i-1}]
			=1-\frac12\Prob[Z_i=1\mid Z_1,\ldots,Z_{i-1}]
			\le1-\frac p2.
			\]
			Let $S=\sum_{i=1}^{N_0}Z_i$.
			Iterating conditional expectations gives
			$
			\E[2^{-S}]\le(1-\frac p2)^{N_0}.
			$
			Applying Markov's inequality to $2^{-S}$ with threshold
			$2^{-pN_0/2}$, and using $1-\frac{p}{2}\le e^{-p/2}$ and
			$\log 2<\frac{3}{4}$, we obtain
			\[
			\begin{aligned}
				\Prob\!\left[S\le\frac{pN_0}{2}\right]
				=\Prob\!\left[2^{-S}\ge2^{-pN_0/2}\right]
				\le2^{pN_0/2}\E[2^{-S}]
				\le2^{pN_0/2}\left(1-\frac p2\right)^{N_0}
				\le e^{-pN_0/8}.
			\end{aligned}
			\]
			
			Now suppose that $N_0$ cycles are examined without
			discarding the word.
			Each occurrence of $Z_i=1$ uses one symbol of the word,
			so $S\le r_0$.
			Since
			$
			pN_0=\frac{h}{32t^2\ell}
			=2^{11}a\ell\ge2(2a-1)=2r_0,
			$
			this event implies $S\le pN_0/2$.
			Consequently, its probability is at most
			$
			e^{-pN_0/8}=e^{-\frac{h}{256t^2\ell}}.
			$
		\end{proof}
	
		\begin{claim}\label{cl:stopped}
			Fix a word and condition on the exact choices read in a procedure that stops
			before $N_0$ examinations without discarding the word.
			For every such event of positive probability, the conditional probability that
			some bad separation agrees with the procedure is at most
			\[
				2h e^{-\frac{h}{16t\ell}}+e^{-\frac{h}{64t^2\ell^2}}.
			\]
		\end{claim}
		\begin{proof}
			Let $E_1$ denote the specified event, including the ordered list of
			examined cycles and their choices, and let $\calC_0$ be this set of cycles.
			Changing choices on other cycles does not change the procedure: at each stage
			the next cycle and the decision to stop are determined by choices already read.
			Thus $E_1$ fixes only $(\xi_C)_{C\in\calC_0}$, and the remaining choices
			are still independent and uniform conditional on $E_1$.
			All probabilities in this proof are conditional on $E_1$ alone, not on the
			existence of a bad separation.
			
			Let $W=\bigcup_{C\in\calC_0}V(C)$.
			Since fewer than $N_0$ cycles were examined, $|W|<\frac{h}{16}$, so
			$\kappa(D'-W)\ge h-|W|>\frac{15h}{16}$.
			Every remaining cycle is contained in $A_L$ or $A_R$.
			Let $V_L$ and $V_R$ be the unions of the vertex sets of these two families,
			respectively, so $V(D'-W)=V_L\mathbin{\dot\cup}V_R$.
			Both sets are determined by $E_1$.
			
			For any agreeing separation, a colour-$j$ path $P$ from a cycle in $V_L$ is represented in $L'\cup X$. Indeed, $x_P\in A_L$, and the cycle containing $P$ is disjoint from $I_L$, so there is a vertex $y_{P'}\in I_L$ with $y_{P'}x_P\in E(G)$, where $v_{P'}\in L'$. This gives the arc $v_{P'}v_P$ in $D_j$, so $v_P\notin R'$, and hence $v_P\in L'\cup X$. Similarly, if $P$ comes from a cycle in $V_R$, then there is a vertex $x_{P'}\in I_R$ with $y_Px_{P'}\in E(G)$ and $v_{P'}\in R'$. Thus $v_Pv_{P'}\in A(D_j)$, so $v_P\notin L'$, and hence $v_P\in R'\cup X$.
			
			First suppose that $|V_L|<\frac{h}{4}$.
			For every $v\in V_L\cup W$,
			\[
			|N_{D'}^+(v)\cap V_R|\ge d_U^+(v)-|V_L|-|W|>\frac{11h}{16}.
			\]
			The number of colour-$j$ paths in $V_R$ whose first vertices are outneighbours
			of $v$ is a sum of independent Bernoulli variables, with mean
			$\mu_v>\frac{11h}{16t\ell}=11\cdot2^{12}ta\ell\ge2a$.
			Its probability of being less than $a$ is at most
			$e^{-\frac{\mu_v}{8}}\le e^{-\frac{h}{16t\ell}}$.
			Since $|V_L\cup W|<h$, except with probability at most $he^{-\frac{h}{16t\ell}}$,
			every vertex of $V_L\cup W$ sends arcs to the first vertices of at least $a$
			colour-$j$ paths in $V_R$.
			For any agreeing bad separation, the last vertex of a path represented in
			$L'$ lies in $V_L\cup W$.
			One of its at least $a$ target paths in $V_R$ avoids $X$, and hence is
			represented in $R'$, a contradiction.
			
			If $|V_R|<\frac{h}{4}$, apply the same argument to inneighbours.
			Except with probability at most $he^{-\frac{h}{16t\ell}}$, the first vertex of every
			path represented in $R'$ receives arcs from the last vertices of at least $a$
			colour-$j$ paths in $V_L$.
			At least one of those paths is represented in $L'$, again a contradiction.
			
			Suppose next that $|V_L|,|V_R|\ge \frac{h}{4}$.
			Since $\frac{h}{8}=2^{13}t^2a\ell^2$ is an integer and $D'-W$ is $(\frac{h}{8})$-strongly
			connected, \Cref{lem:cutmatching} gives a matching of $\frac{h}{8}$ arcs from $V_L$ to $V_R$.
			Choose this matching using a fixed ordering, so that it depends only on
			$E_1$.
			Partition $V_L$ and $V_R$ into their original cycles.
			On each cycle in $V_L$, select the last vertex of its path if its colour is $j$,
			and no vertex otherwise; on each cycle in $V_R$, use the first vertex instead.
			The selections on different cycles are independent, and each vertex is selected
			with probability $\frac{1}{t\ell}$.
			\Cref{lem:part-selection} therefore applies to the matching.
			If $Y$ is the number of surviving arcs, it gives
			\[
			\mu=\E\!\left[Y\mid E_1\right]=\frac{h}{8t^2\ell^2}=2^{13}a\ge2a,
			\qquad
			\Prob\!\left[Y<a\mid E_1\right]\le e^{-\frac{h}{64t^2\ell^2}}.
			\]
			The surviving arcs use different cycles on each side, so they give
			vertex-disjoint arcs of $D_j$.
			If $Y\ge a$, the set $X$ cannot meet all these vertex-disjoint arcs, because
			$|X|<a$.
			An arc avoiding $X$ goes from $L'$ to $R'$, a contradiction.
			Combining the small-side and large-side bounds proves the claim.
		\end{proof}
		
		For a fixed word, different completed procedures that stop before $N_0$
		examinations without discarding the word correspond to disjoint events.
		\Cref{cl:stopped} gives the same conditional upper bound for each such
		event, so the total contribution from all procedures stopping before $N_0$ is at
		most
		$
		2he^{-\frac{h}{16t\ell}}+e^{-\frac{h}{64t^2\ell^2}}.
		$
		The only remaining possibility is that $N_0$ cycles are examined without
		discarding the word, whose probability is bounded by
		\Cref{cl:overflow}. Therefore, for each fixed word, the probability
		that the word is not discarded and some bad separation agrees with
		the procedure is at most
		\begin{equation}\label{eq:one-record}
			e^{-\frac{h}{256t^2\ell}}
			+2he^{-\frac{h}{16t\ell}}
			+e^{-\frac{h}{64t^2\ell^2}}
			\le3e^{-\frac{h}{512t^2\ell^2}}.
		\end{equation}
		
		We verify the last inequality. Since
		$
		\log(2h)
		=17\log2+2\log t+\log a+2\log\ell
		\le32ta\ell
		\le\frac{h}{32t\ell},
		$
		we have
		$
		2he^{-\frac{h}{16t\ell}}
		\le e^{-\frac{h}{32t\ell}}.
		$
		Now let
		$
		z=\frac{h}{512t^2\ell^2}=128a.
		$
		The three terms on the left of \eqref{eq:one-record} are therefore at most
		$
		e^{-2\ell z}, e^{-16t\ell z}, e^{-8z},
		$
		respectively. Each is at most $e^{-z}$, which proves
		\eqref{eq:one-record}.
		
		Finally, \Cref{cl:records} gives $3^{2a-1}$ possible words for each
		colour. Since $3\ell\le6a\le e^{2a}$ and
		$3^{2a-1}<e^{3a}$, a union bound over all words and colours gives
		\[
			\begin{aligned}
				&\Prob\!\left[E_0\text{ and some }D_j\text{ is not $a$-strongly connected}\right]\\
				&\qquad\le
				3\ell\,3^{2a-1}e^{-\frac{h}{512t^2\ell^2}}
				=3\ell\,3^{2a-1}e^{-128a}
				<e^{-123a}<\frac{1}{16}.
			\end{aligned}
	    \]
		Combining this with \eqref{eq:neighbour-failure} gives probability greater than
		$\frac{7}{8}$ that all required properties hold, since $d_0>a$.
	\end{proof}
	
    \section{Proofs of the main results}\label{sec:proofs}
	
	We use the directed version of Menger's theorem.
	
	\begin{theorem}[Menger's theorem; see~\cite{BG,BM}]
		\label{thm:menger}
		Let $D$ be a finite digraph, and let $X,Y\subseteq V(D)$ be
		disjoint. Then the maximum number of pairwise vertex-disjoint
		directed $X$--$Y$ paths equals the minimum size of a vertex set
		meeting every directed $X$--$Y$ path.
	\end{theorem}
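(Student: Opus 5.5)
The easy inequality comes first. If $\mathcal{Q}$ is a family of pairwise vertex-disjoint directed $X$--$Y$ paths and $S$ meets every directed $X$--$Y$ path, then $S$ contains a vertex of each path in $\mathcal{Q}$, and these vertices are distinct. Hence $|\mathcal{Q}|\le|S|$, so the maximum is at most the minimum.

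For the reverse inequality, I would follow the standard arc-deletion induction (G\"oring's argument adapted to digraphs), by induction on $|A(D)|$. Let $k$ be the minimum size of a set meeting every directed $X$--$Y$ path. If $D$ has no arcs, every directed $X$--$Y$ path has length zero. Since $X\cap Y=\emptyset$, no such path exists, so $k=0$ and there is nothing to prove.

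Otherwise, fix an arc $e=uv$ and consider $D-e$. If $D-e$ still needs $k$ vertices to meet all $X$--$Y$ paths, induction gives $k$ disjoint paths in $D-e\subseteq D$. So assume $D-e$ has a set $S$ with $|S|\le k-1$ meeting every $X$--$Y$ path of $D-e$. Every $X$--$Y$ path of $D$ avoiding $S$ uses $e$, so $S\cup\{u\}$ and $S\cup\{v\}$ both meet every $X$--$Y$ path of $D$. Hence $|S|=k-1$ and $u,v\notin S$.

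Set $S_v=S\cup\{v\}$ and $S_u=S\cup\{u\}$. I would next show that every set meeting all $X$--$S_v$ paths of $D-e$ also meets all $X$--$Y$ paths of $D$, so has size at least $k$.
\begin{itemize}
\item An $X$--$Y$ path of $D$ either contains a vertex of $S$, or contains $e$ and hence passes through $v$.
\item In both cases, its initial segment up to the first vertex of $S_v$ is an $X$--$S_v$ path of $D-e$. This segment does not use $e$, because $e$ ends at $v\in S_v$.
\end{itemize}
Thus $D-e$ needs at least $k$ vertices to meet all $X$--$S_v$ paths. Because $X$ and $S_v$ may intersect, I would handle this by noting that the vertices of $X\cap S_v$ are trivial paths, which must be in any separator. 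Equivalently, one can state the theorem for arbitrary $X,Y$ and run the induction in that generality; this is the cleaner route, and I would prove that version.

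By induction, $D-e$ contains $k$ disjoint $X$--$S_v$ paths. Symmetrically, it contains $k$ disjoint $S_u$--$Y$ paths, where the corresponding step uses final segments from the last vertex of $S_u$.

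Finally, I glue the two families. I would take $\mathcal{Q}_1$ to be the $X$--$S_v$ paths, meeting $S_v$ only in their last vertices, and $\mathcal{Q}_2$ to be the $S_u$--$Y$ paths, meeting $S_u$ only in their first vertices. The main point to verify is that a path of $\mathcal{Q}_1$ and a path of $\mathcal{Q}_2$ share no vertex outside $S$, and that $u$ and $v$ appear only where intended. Suppose a vertex $w\notin S$ lay on both. Then following $\mathcal{Q}_1$ from $X$ to $w$ and $\mathcal{Q}_2$ from $w$ to $Y$ yields a directed $X$--$Y$ walk in $D-e$ avoiding $S$, and hence such a path. This contradicts the choice of $S$. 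The cases where $w$ is $u$ or $v$ are handled by the same argument, using that the paths end at $v$, respectively start at $u$.

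Given this, I would concatenate as follows:
\begin{itemize}
\item for each $s\in S$, the path of $\mathcal{Q}_1$ ending at $s$ with the path of $\mathcal{Q}_2$ starting at $s$;
\item the path of $\mathcal{Q}_1$ ending at $v$ with the path of $\mathcal{Q}_2$ starting at $u$, joined through the arc $e$ (traversed as $\dots u\to v\dots$ after reordering appropriately).
\end{itemize}
This yields $k$ disjoint $X$--$Y$ paths in $D$. I expect this disjointness check, together with the bookkeeping for $X\cap S$ and $Y\cap S$, to be the only delicate step.
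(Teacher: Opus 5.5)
Your plan (G\"oring's arc-deletion induction) is the standard route, and the paper simply quotes this theorem from the textbooks. As written, however, the central step fails, because you have attached the endpoints of $e=uv$ to the wrong sides.

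In the critical case $S$ does not meet every $X$--$Y$ path of $D$, so some $X$--$Y$ path of $D$ avoids $S$. That path must use $e$ as $\dots u\to v\dots$, and its part after $v$ lies in $D-e-S$. Hence $v$ is not reachable from $X$ in $D-e-S$, since otherwise $D-e-S$ would contain an $X$--$Y$ walk. Therefore $S$ itself, of size $k-1$, meets every $X$--$S_v$ path of $D-e$, so $D-e$ never contains the $k$ disjoint $X$--$S_v$ paths you want.

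Your justification breaks exactly here. For an $X$--$Y$ path avoiding $S$, the first vertex of $S_v$ is $v$, and the initial segment up to $v$ ends with the arc $uv=e$. The fact that $e$ ends at $v$ is what puts $e$ on the segment, not what keeps it off. The smallest instance is $D=x\to u\to v\to y$ with $X=\{x\}$, $Y=\{y\}$, $e=uv$ and $S=\emptyset$: here $D-e$ has no $x$--$v$ path at all.

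Symmetrically, $u$ cannot reach $Y$ in $D-e-S$, so $S$ meets every $S_u$--$Y$ path of $D-e$, and $\mathcal{Q}_2$ fails as well. Finally, a path ending at $v$ cannot be joined to a path starting at $u$ through the arc $uv$; your phrase ``after reordering appropriately'' hides this.

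The repair is to swap the roles of $u$ and $v$. Every $X$--$Y$ path of $D$ meets $S_u$. Its initial segment up to the first vertex of $S_u$ cannot contain $uv$, since it would then contain $u\in S_u$ before its last vertex. So every set meeting all $X$--$S_u$ paths of $D-e$ meets all $X$--$Y$ paths of $D$. Likewise, final segments from the last vertex of $S_v$ avoid $e$; here ``$e$ ends at $v\in S_v$'' is the right reason.

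Induction then yields $k$ disjoint $X$--$S_u$ paths and $k$ disjoint $S_v$--$Y$ paths. Your reachability argument shows that they meet only at vertices of $S$. You then glue at each $s\in S$, and join the path ending at $u$ to the path starting at $v$ by the arc $uv$.

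Since $X\cap S_u$ and $S_v\cap Y$ may be nonempty, you do need the version of the theorem for arbitrary $X,Y$. Its base case is $k=|X\cap Y|$, realized by trivial paths, rather than $k=0$ as you wrote.
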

	
	\begin{lemma}\label{lem:linkage}
		Let $D$ be a digraph, let $d\ge1$ be an integer, and let $D_0$
		be a $d$-strongly connected digraph whose vertices $v_P$ represent
		pairwise vertex-disjoint directed paths $P$ in $D$.
		Assume that $y_Px_{P'}\in A(D)$ whenever $v_Pv_{P'}\in A(D_0)$.
		Let $X,Y\subseteq V(D)$ be disjoint sets with
		$|X|=|Y|=r, 1\le r\le d,$
		and suppose that $X\cup Y$ is disjoint from every represented
		path.
		Assume further that every $x\in X$ sends an arc to the first
		vertex of at least $d$ represented paths, and every $y\in Y$
		receives an arc from the last vertex of at least $d$ represented
		paths.
		
		Then $D$ contains $r$ pairwise vertex-disjoint directed
		$X$--$Y$ paths such that, after deleting their two endpoints,
		each of these paths is obtained by concatenating one or more
		represented paths, each used in its entirety.
	\end{lemma}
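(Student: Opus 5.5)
The plan is to build an auxiliary digraph $D^\ast$ from $D_0$ by adding $X$ and $Y$ as new vertices, and then apply \Cref{thm:menger} to it. The vertex set of $D^\ast$ is $V(D_0)\cup X\cup Y$. It keeps all arcs of $D_0$. For $x\in X$ it adds an arc $xv_P$ whenever $xx_P\in A(D)$. For $y\in Y$ it adds an arc $v_Py$ whenever $y_Py\in A(D)$. No arcs go into $X$, no arcs leave $Y$, and no arcs join $X$ directly to $Y$.

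Because $X\cup Y$ is disjoint from every represented path, and the represented paths are pairwise vertex-disjoint, any directed $X$--$Y$ path in $D^\ast$ expands to a directed path in $D$. The expansion traverses each represented path $P$ in full. It uses the arc $y_Px_{P'}$ for each consecutive pair, which exists by hypothesis. It uses the arcs $xx_P$ and $y_{P''}y$ at the two ends. Vertex-disjoint paths in $D^\ast$ expand to vertex-disjoint paths in $D$. Each path in $D^\ast$ has at least one internal vertex, so after deleting its two endpoints it is a concatenation of one or more represented paths, as required.

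It therefore suffices, by \Cref{thm:menger}, to show that every set $S\subseteq V(D^\ast)$ meeting all $X$--$Y$ paths has $|S|\ge r$. Suppose instead that $|S|<r\le d$.
\begin{itemize}
\item Some $x\in X\setminus S$ and some $y\in Y\setminus S$ survive, since $|X|=|Y|=r>|S|$.
\item Let $S_0=S\cap V(D_0)$, so $|S_0|<d$. Then $D_0-S_0$ is strongly connected, and it is nonempty because $|V(D_0)|\ge d+1$.
\item The vertex $x$ has at least $d>|S_0|$ out-neighbours in $V(D_0)$, so some out-neighbour $v_P\notin S$.
\item Similarly $y$ has some in-neighbour $v_{P'}\notin S$.
\item Strong connectivity of $D_0-S_0$ gives a $v_P$--$v_{P'}$ path avoiding $S$; the case $v_P=v_{P'}$ is allowed, as a path of length zero.
\item Prepending $x$ and appending $y$ gives an $X$--$Y$ path avoiding $S$, a contradiction.
\end{itemize}
Menger's theorem then yields $r$ vertex-disjoint $X$--$Y$ paths in $D^\ast$.

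The only point that needs care is the inclusion of $X$ and $Y$ themselves in $S$. The counting above handles it: a separator of size less than $r$ leaves at least one vertex on each side, and it meets $V(D_0)$ in fewer than $d$ vertices. Nothing else seems delicate.
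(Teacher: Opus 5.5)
Your proposal is correct and follows essentially the same route as the paper's proof. Both add $X$ as sources and $Y$ as sinks to $D_0$, show that any set $S$ with $|S|<r$ leaves some $x\in X$, some $y\in Y$, and a strongly connected $D_0-(S\cap V(D_0))$ joining them, apply Menger's theorem, and then expand the resulting paths back into $D$.
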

	
	\begin{proof}
		Form an auxiliary digraph $D_0'$ from $D_0$ by adding the vertices of $X$
		as sources and the vertices of $Y$ as sinks.
		For $x\in X$ and $v_P\in V(D_0)$, add the arc $xv_P$ whenever
		$xx_P\in A(D)$, and for $y\in Y$ add the arc $v_Py$ whenever
		$y_Py\in A(D)$.
		No other arcs incident with $X\cup Y$ are added.
		
		Let $S\subseteq V(D_0')$ satisfy $|S|<r$, and let
		$
		S_0=S\cap V(D_0).
		$
		Since $|X|=|Y|=r>|S|$, there are vertices
		$x\in X\setminus S$ and
		$y\in Y\setminus S.$
		By assumption, $x$ has at least $d$ outneighbours in $V(D_0)$
		and $y$ has at least $d$ inneighbours in $V(D_0)$.
		After deleting $S_0$, each therefore still has at least
		$
		d-|S_0|>0
		$
		such neighbours.
		
		Moreover,
		$
		|S_0|\le |S|<r\le d.
		$
		Since $D_0$ is $d$-strongly connected, $D_0-S_0$ is strongly
		connected.
		Choose an outneighbour $v_P$ of $x$ and an inneighbour $v_Q$
		of $y$ in $D_0-S_0$.
		There is a directed $v_P$--$v_Q$ path in $D_0-S_0$.
		Adding the arcs from $x$ and to $y$ gives a directed
		$x$--$y$ path in $D_0'-S$.
		
		Thus no set of fewer than $r$ vertices meets every directed
		$X$--$Y$ path in $D_0'$.
		By \Cref{thm:menger}, $D_0'$ contains
		$r$ pairwise vertex-disjoint directed $X$--$Y$ paths.
		Because no direct arc from $X$ to $Y$ was added, each such path
		contains at least one vertex of $D_0$.
		Since vertices of $X$ are sources and vertices of $Y$ are
		sinks, its only vertices in $X\cup Y$ are its initial and final
		vertices.
		
		Finally, replace every auxiliary vertex $v_P$ occurring on
		these paths by the whole represented path $P$, and replace each
		auxiliary arc $v_Pv_{P'}$ by the corresponding joining arc
		$y_Px_{P'}$ of $D$.
		This produces $r$ directed $X$--$Y$ paths in $D$.
		They remain pairwise vertex-disjoint because the represented
		paths are pairwise vertex-disjoint and are disjoint from
		$X\cup Y$.
	\end{proof}
	
	\begin{proof}[Proof of \Cref{thm:quantitative}]
		Let $\ell=\lceil\log_2 a\rceil+1$ and $h=2^{18}k^2a\ell^2$, so $2k\ge4$, $2k\not\equiv0\pmod{3}$ and
		$\ell\le2a$.
		Let $D$ satisfy $\al(D)\le a$ and $\kappa(D)\ge r_k(a)+h$, and let $G=G_2(D)$.
		Take an inclusion-maximal family $\calC$ of vertex-disjoint copies of $C_{2k}$ in $G$,
		and let $R$ be the set of uncovered vertices.
		Maximality implies that $G[R]$ contains no $C_{2k}$, so
		\Cref{lem:remainder} gives
		$
		|R|\le r_k(a).
		$
		Since $D$ is $(r_k(a)+h)$-strongly connected, deleting $R$
		leaves a digraph that is at least
		$
		r_k(a)+h-|R|\ge h
		$
		strongly connected. Hence $D'=D-R$ is $h$-strongly connected. Let $U=V(D')$.
		
		Moreover, every vertex $v\in V(D)$ satisfies
		$
		d_D^+(v),d_D^-(v)\ge r_k(a)+h.
		$
		Deleting the vertices of $R$ removes at most $|R|$ inneighbours or
		outneighbours of $v$. Therefore, for every $v\in V(D),$
		\[
		d_U^+(v),d_U^-(v)
		\ge r_k(a)+h-|R|
		\ge h.
		\]
		In particular, $D'$ and $\calC$ are nonempty.
		
		Before making any random choices, apply \Cref{lem:forest}
		in $G[R]$ to the edgeless spanning linear forest on $R$, obtaining a path cover
		$\mathcal R$ with at most $a$ paths.
		Let $T$ be the set of endpoints of the paths in $\mathcal R$, so $T\subseteq R$ and $|T|\le2a$.
		By \Cref{lem:selection}, choose retained edges, oriented complementary paths and colours with
		the stated connectivity and neighbour properties.
		Let $M=\{e_C:C\in\calC\}$, let $V(M)$ be the set of endpoints of these edges, and let $Z=R\cup V(M)$.
	\begin{claim}\label{cl:initial}
		There is a path cover $\{P_1,\ldots,P_p\}$ of $D[Z]$, where
		$1\le p\le a$, containing one orientation of every edge of $M$.
		Every endpoint of these paths belongs to $T\cup V(M)$.
		In particular, the paths are disjoint from all complementary paths.
	\end{claim}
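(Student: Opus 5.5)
We apply \Cref{lem:forest} to the graph $G[Z]$, where $G=G_2(D)$, starting from a suitable spanning linear forest $F$ of $G[Z]$. For $F$ we take the union of the path cover $\mathcal R$ of $G[R]$ and the matching $M$. Since $R$ is disjoint from $U\supseteq V(M)$, and the retained edges come from vertex-disjoint cycles, these pieces are pairwise vertex-disjoint. Every vertex of $Z$ then has degree at most two in $F$, and $F$ has no cycles, so $F$ is a spanning linear forest of $G[Z]$. Its components are the paths of $\mathcal R$ and the single edges $e_C$. Its endpoints are exactly the endpoints of the paths in $\mathcal R$, which form the set $T$, together with the vertices of $V(M)$.

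Next we apply \Cref{lem:forest} to $F$. This gives a spanning linear forest $F'$ of $G[Z]$ that contains every edge of $F$, in particular every edge of $M$. Every endpoint of $F'$ is an endpoint of $F$, and hence lies in $T\cup V(M)$. The number of components of $F'$ is at most $\alpha(G[Z])\le\alpha(G)=\al(D)\le a$. Also $Z\neq\emptyset$, because $\calC$ is nonempty and so $M\neq\emptyset$. Hence the number $p$ of components satisfies $1\le p\le a$.

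Each component of $F'$ is a path in $G_2(D)$, so each of its edges is a directed $2$-cycle in $D$. We may therefore orient each component in either direction and obtain a directed path in $D[Z]$. These directed paths partition $Z$, so they form a path cover of $D[Z]$. Each edge of $M$ lies on one of these paths and is traversed in one orientation.

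It remains to show that these paths avoid the complementary paths. Every complementary path $P_C$ lies in $V(C)\setminus V(e_C)$. This set is disjoint from $R$, since the cycles of $\calC$ cover $V(D')=U$. It is disjoint from $V(e_C)$ by definition. It is also disjoint from $V(e_{C'})$ for $C'\neq C$, because the cycles are vertex-disjoint. Hence $Z$ meets no complementary path.

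No step here is a real obstacle. The only points needing care are that $F$ is indeed a linear forest, which uses the disjointness of $R$ from the cycles, and that the bound on $p$ holds because the independence number of an induced subgraph is at most that of $G$.
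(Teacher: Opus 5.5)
Your proof is correct and follows essentially the same argument as the paper: you take the union of $\mathcal R$ and $M$ as a spanning linear forest of $G[Z]$, apply \Cref{lem:forest} to obtain at most $\alpha(G[Z])\le a$ components with all endpoints in $T\cup V(M)$, orient each component using that its edges are directed $2$-cycles, and note that $Z$ meets no complementary path. The only difference is that you spell out the vertex-disjointness and nonemptiness checks that the paper states in a single line.
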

		\begin{proof}
			The paths of $\mathcal R$ together with the edges of $M$ form a spanning linear forest of $G[Z]$.
			Apply \Cref{lem:forest} to extend this forest to one with at most $\alpha(G[Z])\le a$
			components, without creating new endpoints or deleting any edge.
			Orient each component in either direction, which is possible because all its edges belong to $G_2(D)$.
			Its endpoints are old endpoints, lying in $T\cup V(M)$, and all complementary paths avoid $Z$.
		\end{proof}
		We call the chosen orientations of the retained edges in $M$
		the \emph{prescribed arcs}.
		\Cref{fig:whole-construction} gives a schematic illustration of the
		construction: the initial paths $P_1,\dots,P_p$ are successively joined into a directed
		cycle, which is then extended to a Hamilton cycle.
		\begin{claim}\label{cl:merging}
			Let $P_1,\ldots,P_q$ be pairwise vertex-disjoint directed paths,
			where $2\le q\le a$, such that
			$Z\subseteq\bigcup_{i=1}^q V(P_i)$ and every prescribed arc belongs
			to one of these paths.
			Suppose that all their endpoints lie in $T\cup V(M)$, and that their
			vertices outside $Z$ form a union of complementary paths of
			previously used colours, each traversed in its entirety and in
			its selected direction.
			
			For any unused colour $j$, these paths can be joined into
			$\lceil \frac{q}{2}\rceil$ pairwise vertex-disjoint directed paths
			satisfying the same containment and traversal conditions,
			with colour $j$ added to the used colours.
			Every endpoint of a resulting path is an endpoint of one of the original paths.
		\end{claim}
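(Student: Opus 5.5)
We pair the paths and apply \Cref{lem:linkage} once, with $D_0=D_j$ and $d=a$. Let $r=\lfloor q/2\rfloor\ge1$, so $r\le a$. For $i\in[r]$, let $x_i$ be the last vertex of $P_{2i-1}$ and $y_i$ the first vertex of $P_{2i}$. If $q$ is odd, $P_q$ is left untouched. Put $X=\{x_1,\ldots,x_r\}$ and $Y=\{y_1,\ldots,y_r\}$. These are $2r$ distinct vertices, since they are endpoints of distinct paths; if some $P_i$ is a single vertex it has only one endpoint in play. All of them lie in $T\cup V(M)\subseteq V(D')\cup T$.

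The represented paths are the colour-$j$ complementary paths $\calP_j$. By \Cref{lem:selection}, $D_j$ is $a$-strongly connected. By its definition, $v_Pv_{P'}\in A(D_j)$ implies $y_Px_{P'}\in A(D)$. It remains to check the two hypotheses of \Cref{lem:linkage}.

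\emph{Disjointness.} The colour-$j$ paths must be disjoint from $X\cup Y$, and in fact from all of $\bigcup_i V(P_i)$. Vertices of the $P_i$ in $Z$ avoid every complementary path, by \Cref{cl:initial}. Vertices of the $P_i$ outside $Z$ lie on complementary paths of previously used colours, which are distinct from the colour-$j$ paths since each cycle receives one colour.

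\emph{Degrees.} Each $x\in X$ must send arcs to first vertices of at least $a$ colour-$j$ paths, and each $y\in Y$ must receive arcs from last vertices of at least $a$ such paths. This is $d_j^+(x),d_j^-(y)\ge a$ from \Cref{lem:selection}, valid because $X\cup Y\subseteq V(D')\cup T$. For $v\in V(D')$ those counts exclude the path from $v$'s own cycle, so they still count at least $a$ valid paths, and no path through $v$ itself is counted.

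This step is the main obstacle. \Cref{lem:linkage} returns $r$ disjoint $X$--$Y$ paths whose interiors are concatenations of whole colour-$j$ paths, and these must not meet the existing $P_i$. The disjointness check gives exactly this, because every vertex available to the linkage, other than $X\cup Y$, lies on a colour-$j$ complementary path.

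The linkage paths need not pair $x_i$ with $y_i$; they induce some bijection $\pi$ from $X$ to $Y$. Form the digraph on the index set whose arcs join $P_{2i-1}$ to the path whose first vertex is $\pi(x_i)$. Each arc is a linkage path $Q$ running from the end of one odd-indexed path to the start of an even-indexed path. Every odd path has out-degree one and every even path has in-degree one. So the structures obtained are alternating chains $P_{2i-1}QP_{2i'}$, each of which is a single path.

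Since $\pi$ maps odd-indexed ends to even-indexed starts, no cycles form, and the result is exactly $r$ paths of the form $P_{\rm odd}\,Q\,P_{\rm even}$, plus $P_q$ if $q$ is odd. That gives $\lceil q/2\rceil$ paths in total.

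Finally we verify the invariants. Every prescribed arc and all of $Z$ are retained. The new vertices are whole colour-$j$ complementary paths traversed in their selected direction. The endpoints are the first vertices of odd-indexed paths and the last vertices of even-indexed paths, and the same holds for $P_q$. These are original endpoints, so they lie in $T\cup V(M)$.
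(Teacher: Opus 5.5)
Your proposal is correct and follows essentially the same route as the paper: you split $2\lfloor q/2\rfloor$ of the paths into a family of $X$-ends and a family of $Y$-starts, apply \Cref{lem:linkage} in $D_j$ with $d=a$ using the degree and connectivity guarantees of \Cref{lem:selection}, and concatenate along the resulting bijection. Your explicit handling of the bijection $\pi$ and of the disjointness between colour-$j$ paths and the current paths spells out what the paper's proof states more briefly.
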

		\begin{proof}
			Let $r=\lfloor \frac{q}{2}\rfloor$. Choose $2r$ of the current paths and divide
			them into two families of $r$ paths each; if $q$ is odd, leave the remaining
			path unchanged. Let $X$ be the set of last vertices of the paths in the first
			family, and let $Y$ be the set of first vertices of the paths in the second
			family. Since the current paths are pairwise vertex-disjoint, $X$ and $Y$
			are disjoint and $|X|=|Y|=r$.
			
			Fix an unused colour $j$. By assumption, the current paths use only vertices
			of $Z$ and complementary paths of previously used colours. Hence every path
			in $\calP_j$ is disjoint from all current paths. Moreover, all vertices of
			$X\cup Y$ lie in $T\cup V(M)$, so \Cref{lem:selection} gives
			\[
			d_j^+(x)\ge a\quad\text{for all }x\in X,
			\qquad
			d_j^-(y)\ge a\quad\text{for all }y\in Y.
			\]
			Since $r\le q\le a$ and $D_j$ is $a$-strongly connected, we may apply
			\Cref{lem:linkage} with $d=a$. It gives $r$ pairwise vertex-disjoint
			directed $X$--$Y$ paths whose internal vertices are obtained from whole
			complementary paths of colour $j$.
			
			Use these paths to join each path in the first family to a distinct path in
			the second family. Each such concatenation is a directed path, since the two
			old paths are distinct and the interior of the connecting path is disjoint
			from every current path. The resulting paths are pairwise vertex-disjoint,
			and they are also disjoint from the unchanged path when $q$ is odd.
			
			No old vertex or arc is removed, so the new paths still cover $Z$ and contain
			every prescribed arc. Every newly added complementary path is used in its
			entirety and in its selected direction, and the endpoints of each new path are
			endpoints of the old paths. Thus all required properties are preserved. Finally,
			the number of resulting paths is
			$
			r+(q-2r)=\lceil \frac{q}{2}\rceil.
			$
		\end{proof}
		
		Starting with \Cref{cl:initial}, apply \Cref{cl:merging} while more than one path remains,
		using colours $1,2,\ldots$ in order.
		After $i$ rounds there are at most $\lceil \frac{p}{2^i}\rceil$ paths, so at most $\lceil\log_2 a\rceil=\ell-1$ rounds leave a single directed path.
		Its endpoints still lie in $T\cup V(M)$ and are distinct, since it contains a prescribed arc.
		Using the unused colour $\ell$, apply \Cref{lem:linkage} with $d=a$ and $r=1$ to join its
		last vertex to its first.
		The connecting path is internally disjoint from the current path, so their union is a simple
		directed cycle $C^*$ containing $Z$ and every prescribed arc.
		Every complementary path is used by $C^*$ either in its entirety or not at all.
		\begin{claim}\label{cl:extension}
			The cycle $C^*$ extends to a Hamilton cycle without changing the cyclic order of its existing vertices.
		\end{claim}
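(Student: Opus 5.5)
The extension works one cycle of $\calC$ at a time: for every $C\in\calC$ whose complementary path $P_C$ is not on $C^*$, we swap the prescribed arc of $e_C$ for the rest of $C$. Write $C=v_0v_1\cdots v_{t-1}v_0$ with $t=2k$ and $e_C=v_0v_{t-1}$, so that $P_C=v_1v_2\cdots v_{t-2}$. The prescribed arc lies on $C^*$, say as $v_0v_{t-1}$ after relabelling along $C$ if needed. All edges of $C$ are edges of $G_2(D)$, so each is a directed $2$-cycle in $D$. Hence $v_0v_1\cdots v_{t-1}$ is a directed path in $D$ from the tail to the head of the prescribed arc. Replacing the arc $v_0v_{t-1}$ on $C^*$ by this path inserts exactly the vertices of $P_C$ between $v_0$ and $v_{t-1}$. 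The direction selected for $P_C$ plays no role here, since a path of $G_2(D)$ can be traversed either way.

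Three facts show the replacements are legitimate and give a Hamilton cycle.

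First, the inserted vertices are new. By construction, $C^*$ uses each complementary path either in its entirety or not at all. If $P_C$ is not used, no vertex of $V(C)\setminus V(e_C)$ lies on $C^*$. This is because $V(D)=R\mathbin{\dot\cup}V(M)\mathbin{\dot\cup}\bigcup_{C}V(P_C)$, and the vertices of $C^*$ outside $Z$ form a union of whole complementary paths.

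Second, the replacements do not interfere with one another. Distinct cycles of $\calC$ are vertex-disjoint, so distinct replacements insert disjoint vertex sets. Each replacement acts on its own prescribed arc, and that arc is untouched by the other replacements. So we can perform them simultaneously, or in any order, and always get a simple directed cycle.

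Third, the cyclic order of the old vertices is preserved. Each step only subdivides an arc, so the old vertices keep their cyclic order.

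After all replacements, the cycle contains $R$ and every vertex of $V(M)$, because $Z\subseteq V(C^*)$. It also contains every complementary path: either $C^*$ already used it, or it was inserted. So the final cycle is Hamiltonian.

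The only point needing care is the bookkeeping that the prescribed arc of a cycle whose $P_C$ is unused still sits on $C^*$ as an arc, and is not split by a connecting path. This holds because the merging steps (\Cref{cl:merging} and the final closing) never remove old arcs. They only add connecting paths between endpoints of existing paths, and prescribed arcs lie inside the paths of \Cref{cl:initial}.
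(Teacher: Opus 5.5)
Your proposal is correct and follows essentially the same argument as the paper. For each cycle whose complementary path $P_C$ is absent from $C^*$, you replace the prescribed arc of $e_C$ by the other path around $C$; this path is available in either direction because its edges are $2$-cycles of $D$, it inserts exactly the new vertices $V(P_C)$, and it cannot interfere with other replacements because the cycles are vertex-disjoint. Your extra check that every prescribed arc survives on $C^*$ (the merging and closing steps never delete arcs) is the fact the paper records when it states that $C^*$ contains every prescribed arc.
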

		\begin{proof}
			For each unused complementary path $P_C$, let $uv$ be the orientation of its retained edge $e_C$ on $C^*$.
			Replace the arc $uv$ by the other $u$--$v$ path along the original cycle $C$, directed from $u$ to $v$.
			All edges of $C$ are available in both directions in $D$, so this replacement is valid even if it
			traverses $P_C$ opposite to its previously selected direction.
			The inserted internal vertices are exactly $V(P_C)$, all absent from $C^*$, and replacements for
			different cycles have disjoint interiors because the cycles of $\calC$ are vertex-disjoint.
			All vertices of $R$ already lie on $C^*$, and every remaining vertex belongs to one of these inserted
			paths, so the resulting directed cycle is Hamiltonian.
		\end{proof}
		This proves \Cref{thm:quantitative}.
	\end{proof}
	\begin{proof}[Proof of \Cref{thm:asymptotic}]
		Let $t=\log(a+2)$ and first assume that $a$ is sufficiently large.
		Choose the least integer $k$ with $s=k-1\ge \frac{t}{2\log t}$ and $k\not\equiv0\pmod{3}$.
		At most one integer is skipped because of divisibility by $3$, so $s=\frac{t}{2\log t}+O(1)$.
		For large $a$ we have $\frac{\log a}{s}\le2\log t$ and $\frac{\log(2s)}{s}\le1$, and hence
		\[
		(2sa)^{1+\frac{1}{s}}
		=2sa\exp\left(\frac{\log a+\log(2s)}s\right)
		\le2e\,sat^2
		=O\left(\frac{at^3}{\log t}\right).
		\]
		The remaining summands in $r_k(a)$ are $O(ka)$, and the ceiling adds less than one, so
		$r_k(a)=O(\frac{at^3}{\log t})$.
		Also $2k=O(\frac{t}{\log t})$ and $\ell=O(t)$, giving
		\[
		h_k(a)=2^{18}k^2a\ell^2
		=O\left(\frac{at^4}{(\log t)^2}\right).
		\]
		Since $\frac{at^3}{\log t}=o(\frac{at^4}{(\log t)^2})$, \Cref{thm:quantitative}, together with
		$t\sim\log a$ and $\log t\sim\log\log a$, proves the
		claimed bound for all sufficiently large $a$.
		For the finitely many remaining integers $a\ge3$, use \Cref{thm:quantitative} with $k=2$ and
		enlarge the absolute constant $c$.
	\end{proof}
	\begin{proof}[Proof of \Cref{cor:epsilon}]
		For each fixed $\varepsilon>0$, $\frac{(\log a)^4}{(\log\log a)^2}=o(a^\varepsilon)$ as $a\to\infty$.
		The assertion follows from \Cref{thm:asymptotic}.
	\end{proof}
	
\begin{figure}[t]
	\centering
	\begingroup
	\definecolor{initblue}{RGB}{55,105,185}
	\definecolor{hamyellow}{RGB}{235,200,40}
	\definecolor{cyclecred}{RGB}{200,45,50}
	
	\begin{tikzpicture}[
		x=1cm,y=1cm,
		font=\small,
		line cap=round,
		line join=round,
		region/.style={draw=black!45, dashed, line width=.45pt, rounded corners=7pt},
		edge/.style={draw=black, line width=.42pt},
		initpath/.style={draw=initblue, line width=1.15pt, dash pattern=on 3pt off 2.2pt},
		hamcycle/.style={
			preaction={draw=white, line width=6.0pt, opacity=.65},
			draw=hamyellow!90!orange,
			line width=4.5pt,
			opacity=.88
		},
		cycleC/.style={
			preaction={draw=white, line width=2.2pt, opacity=.82},
			draw=cyclecred,
			line width=1.05pt
		},
		flow/.style={
			postaction={decorate},
			decoration={
				markings,
				mark=at position .56 with {
					\arrow{Stealth[length=1.6mm,width=1.15mm]}
				}
			}
		},
		cflow/.style={
			postaction={decorate},
			decoration={
				markings,
				mark=at position .52 with {
					\arrow{Stealth[length=1.8mm,width=1.2mm]}
				}
			}
		},
		dot/.style={circle, fill=black, inner sep=.95pt},
		lab/.style={fill=white, inner sep=1pt}
		]
		
		\draw[region] (0.0,0.7) rectangle (3.2,5.9);
		\draw[region] (3.9,0.45) rectangle (11.95,6.15);
		
		\node at (1.6,5.62) {$R$};
		\node at (7.9,5.88) {$D'$};
		
		\coordinate (a1) at (0.7,4.8);
		\coordinate (a2) at (1.6,4.45);
		\coordinate (a3) at (2.55,4.75);
		
		\coordinate (b1) at (2.55,1.85);
		\coordinate (b2) at (1.6,1.50);
		\coordinate (b3) at (0.7,1.85);
		
		\foreach \i/\cx/\cy in {
			1/7.05/5.00,
			2/9.25/3.30,
			3/4.95/3.30,
			4/7.05/1.60
		}{
			\foreach \j in {0,...,7}{
				\pgfmathsetmacro{\ang}{157.5-45*\j}
				\coordinate (q\i-\j) at ({\cx+0.62*cos(\ang)},{\cy+0.62*sin(\ang)});
			}
		}

		\draw[hamcycle]
		(a1)
		-- (a2)
		-- (a3)
		.. controls (3.55,4.95) and (5.25,5.25) .. (q1-0)
		-- (q1-1)
		-- (q1-2)
		-- (q1-3)
		-- (q1-4)
		-- (q1-5)
		-- (q1-6)
		-- (q1-7)
		.. controls (7.20,3.00) and (8.25,4.50) .. (q2-0)
		-- (q2-1)
		-- (q2-2)
		-- (q2-3)
		-- (q2-4)
		-- (q2-5)
		-- (q2-6)
		-- (q2-7)
		.. controls (8.20,3.30) and (6.15,2.40) .. (q4-0)
		-- (q4-1)
		-- (q4-2)
		-- (q4-3)
		-- (q4-4)
		-- (q4-5)
		-- (q4-6)
		-- (q4-7)
		.. controls (5.25,1.35) and (3.55,1.65) .. (b1)
		-- (b2)
		-- (b3)
		.. controls (0.10,2.90) and (3.45,2.45) .. (q3-7)
		-- (q3-6)
		-- (q3-5)
		-- (q3-4)
		-- (q3-3)
		-- (q3-2)
		-- (q3-1)
		-- (q3-0)
		.. controls (3.45,4.15) and (0.10,3.75) .. (a1);

		\draw[initpath,flow]
		(a1)
		-- (a2)
		-- (a3)
		.. controls (3.55,4.95) and (5.25,5.25) .. (q1-0)
		-- (q1-7)
		.. controls (7.20,3.00) and (8.25,4.50) .. (q2-0)
		-- (q2-1);

		\draw[initpath,flow]
		(q4-0)
		-- (q4-7)
		.. controls (5.25,1.35) and (3.55,1.65) .. (b1)
		-- (b2)
		-- (b3)
		.. controls (0.10,2.90) and (3.45,2.45) .. (q3-7)
		-- (q3-6);

		\foreach \i in {1,...,4}{
			\draw[edge]
			(q\i-0)--(q\i-1)--(q\i-2)--(q\i-3)--(q\i-4)--(q\i-5)--(q\i-6)--(q\i-7)--cycle;
		}

		\draw[edge,flow] (q1-0)--(q1-7);
		\draw[edge,flow] (q2-0)--(q2-1);
		\draw[edge,flow] (q3-7)--(q3-6);
		\draw[edge,flow] (q4-0)--(q4-7);

		\draw[edge,flow]
		(a3)
		.. controls (3.55,4.95) and (5.25,5.25) .. (q1-0);
		\draw[edge,flow]
		(q1-7)
		.. controls (7.20,3.00) and (8.25,4.50) .. (q2-0);
		\draw[edge,flow]
		(q2-7)
		.. controls (8.20,3.30) and (6.15,2.40) .. (q4-0);
		\draw[edge,flow]
		(q4-7)
		.. controls (5.25,1.35) and (3.55,1.65) .. (b1);
		\draw[edge,flow]
		(b3)
		.. controls (0.10,2.90) and (3.45,2.45) .. (q3-7);
		\draw[edge,flow]
		(q3-0)
		.. controls (3.45,4.15) and (0.10,3.75) .. (a1);

		\draw[cycleC,cflow]
		(0.76824,4.63398)
		-- (1.59552,4.31226)
		-- (2.58241,4.62391)
		-- (3.69481,4.82876)
		-- (4.46041,4.94662)
		-- (5.16358,5.03325)
		-- (5.79709,5.08726)
		-- (6.34719,5.10747)
		-- (6.34719,4.73694)
		-- (6.47257,4.46218)
		-- (6.58640,4.26656)
		-- (6.71476,4.09488)
		-- (6.84961,3.95920)
		-- (6.99962,3.85110)
		-- (7.09553,3.80136)
		-- (7.20170,3.76112)
		-- (7.31691,3.73260)
		-- (7.43013,3.71757)
		-- (7.65583,3.71721)
		-- (8.12473,3.76305)
		-- (8.22693,3.75945)
		-- (8.31369,3.74150)
		-- (8.40841,3.69250)
		-- (8.48455,3.61430)
		-- (q2-0)
		-- (9.06658,3.74281)
		-- (9.43342,3.74281)
		-- (9.69281,3.48342)
		-- (9.69281,3.11658)
		-- (9.43342,2.85719)
		-- (9.06658,2.85719)
		-- (q2-7)
		-- (8.57970,3.22425)
		-- (8.46871,3.23171)
		-- (8.34001,3.22560)
		-- (8.09873,3.18665)
		-- (7.82634,3.11347)
		-- (7.54320,3.01229)
		-- (7.26150,2.88800)
		-- (6.97337,2.73300)
		-- (6.73485,2.57444)
		-- (6.54298,2.41085)
		-- (6.47015,2.33196)
		-- (6.40843,2.24991)
		-- (6.34211,2.12276)
		-- (6.31364,2.00192)
		-- (6.31723,1.90253)
		-- (6.34719,1.80576)
		-- (6.34719,1.49253)
		-- (5.98321,1.50258)
		-- (5.57991,1.52815)
		-- (5.14956,1.56823)
		-- (4.68514,1.62316)
		-- (3.65529,1.77788)
		-- (2.53949,1.98467)
		-- (1.60124,1.63900)
		-- (0.79236,1.95357)
		-- (0.76855,2.01464)
		-- (0.75895,2.06625)
		-- (0.76287,2.11328)
		-- (0.78062,2.15369)
		-- (0.84735,2.22124)
		-- (0.96379,2.28663)
		-- (1.12786,2.34575)
		-- (1.33123,2.39611)
		-- (1.55068,2.43575)
		-- (1.82562,2.47346)
		-- (3.02049,2.59371)
		-- (3.53605,2.66254)
		-- (3.80008,2.71277)
		-- (4.01411,2.76678)
		-- (4.19585,2.82752)
		-- (4.35633,2.89975)
		-- (4.65889,2.59719)
		-- (5.24111,2.59719)
		-- (5.65281,3.00889)
		-- (5.65281,3.59111)
		-- (5.24111,4.00281)
		-- (4.65889,4.00281)
		-- (4.35640,3.70032)
		-- (4.20961,3.76774)
		-- (4.02951,3.83038)
		-- (3.81697,3.88677)
		-- (3.57126,3.93706)
		-- (3.05857,4.01305)
		-- (1.79393,4.15815)
		-- (1.49172,4.20623)
		-- (1.23991,4.26093)
		-- (1.00256,4.33809)
		-- (0.91587,4.37943)
		-- (0.84324,4.42660)
		-- (0.79537,4.47267)
		-- (0.76644,4.52305)
		-- (0.75858,4.57321)
		-- cycle;

		\node[lab] at (1.15,4.98) {$P_1$};
		\node[lab] at (1.15,1.25) {$P_2$};

		\node[lab] at (6.25,5.75) {$C_{2k}$};
		\node[lab] at (10.20,3.90) {$C_{2k}$};
		\node[lab] at (4.28,4.26) {$C_{2k}$};
		\node[lab] at (6.25,1.00) {$C_{2k}$};

		\begin{scope}[font=\scriptsize]
			\draw[edge,flow] (0.25,-0.35)--(1.10,-0.35);
			\node[anchor=west] at (1.25,-0.35) {prescribed arc};
			
			\draw[initpath,flow] (5.10,-0.35)--(5.95,-0.35);
			\node[anchor=west] at (6.10,-0.35) {initial path};
			
			\draw[cycleC,cflow] (0.25,-0.85)--(1.10,-0.85);
			\node[anchor=west, text=cyclecred] at (1.25,-0.85) {cycle $C^*$};
			
			\draw[hamcycle] (5.10,-0.85)--(5.95,-0.85);
			\draw[edge]     (5.10,-0.85)--(5.95,-0.85);
			\node[anchor=west] at (6.10,-0.85) {Hamilton cycle};
		\end{scope}
		
		\foreach \p in {a1,a2,a3,b1,b2,b3}{
			\node[dot] at (\p) {};
		}
		\foreach \i in {1,...,4}{
			\foreach \j in {0,...,7}{
				\node[dot] at (q\i-\j) {};
			}
		}
		
	\end{tikzpicture}
	\endgroup
	\caption{A schematic illustration of the construction of the Hamilton
		cycle in the proof of \Cref{thm:quantitative}.}
	\label{fig:whole-construction}
\end{figure}

\section{A counterexample to the Jackson--Ordaz pancyclicity conjecture
	for digraphs}\label{sec:pancyclicity}

We use the graph construction of Kostochka, Pudl\'ak and
R\"odl~\cite[Section~2]{KPR}, in the affine-plane form described by
Kocbek~\cite[Definition~2.2 and Theorem~4.2]{Kocbek}.
We describe the construction through a bipartite graph and prove
its required properties before constructing the following digraphs.

\begin{theorem}\label{thm:pancyclic-construction}
	For every prime $q\ge11$, there is a Hamiltonian digraph $D_q$
	with no directed $3$-cycle such that
	\[
	|V(D_q)|=4q^2(q+1),\qquad
	\kappa(D_q)=q^2(q+1),\qquad
	\al(D_q)\le8q^2+4q-4.
	\]
\end{theorem}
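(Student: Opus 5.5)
The plan is to take a triangle-free graph $H_q$ on $q^2(q+1)$ vertices with small independence number, put it as $2$-cycles inside each of four parts, and join the parts cyclically by one-way arcs. Concretely, let $H_q$ be the Kostochka--Pudl\'ak--R\"odl graph in Kocbek's affine-plane form. Its vertex set is the set of flags $(p,L)$ of the affine plane $\mathrm{AG}(2,q)$, with $p$ a point on the line $L$. There are $q^2$ points, each on $q+1$ lines, so $|V(H_q)|=q^2(q+1)$. The two properties I need are:
\begin{itemize}
\item $H_q$ is triangle-free;
\item $\alpha(H_q)\le 2q^2+q-1$.
\end{itemize}
The paper says it will describe the construction through a bipartite graph. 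So I would define $H_q$ via an auxiliary bipartite incidence-type graph, built from points, lines and the $q+1$ parallel classes. Adjacency would be determined by an incidence condition that is twisted by an arithmetic rule modulo $q$ on the slopes. I would then verify both properties directly rather than only cite \cite{KPR,Kocbek}.

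Given $H_q$, the digraph $D_q$ is built as follows. Take four vertex-disjoint copies $V_0,V_1,V_2,V_3$ of $V(H_q)$, each of size $N=q^2(q+1)$. Inside each $V_i$, replace every edge of $H_q$ by a directed $2$-cycle. Add every arc from $V_i$ to $V_{i+1}$, with indices modulo $4$, and no other arcs. Then $|V(D_q)|=4q^2(q+1)$, and the following properties are routine.
\begin{itemize}
\item \emph{No directed $3$-cycle.} Each arc either stays in a part or advances the index by $1$. The three arcs of a directed $3$-cycle must advance the index by a total of $0$ modulo $4$, so all of them stay inside one part. Such a $3$-cycle would consist of three pairwise adjacent vertices of $H_q$, contradicting triangle-freeness.
\item \emph{Bound on $\al$.} Every directed $2$-cycle lies inside a single part, since between different parts arcs go only from $V_i$ to $V_{i+1}$. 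Hence $G_2(D_q)$ is the disjoint union of four copies of $H_q$, and
\[
\al(D_q)=4\alpha(H_q)\le 8q^2+4q-4.
\]
\item \emph{Hamiltonicity.} List $V_i=\{v_i^1,\dots,v_i^N\}$ and traverse
\[
v_0^1v_1^1v_2^1v_3^1v_0^2\cdots v_3^N v_0^1 .
\]
All of its arcs go from some $V_i$ to $V_{i+1}$, so this is a Hamilton cycle.
\item \emph{Connectivity, lower bound.} Delete any set $S$ with $|S|<N$. Every part keeps a vertex, and the complete arc sets between consecutive parts survive on the remaining vertices. So every remaining vertex reaches a remaining vertex of each later part, and hence every remaining vertex. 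Thus $\kappa(D_q)\ge N$.
\item \emph{Connectivity, upper bound.} Delete $V_1$. All out-arcs of $V_0$ go into $V_0\cup V_1$, so $V_0$ can no longer reach $V_2$. Hence $\kappa(D_q)=N=q^2(q+1)$.
\end{itemize}

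The real work is the two properties of $H_q$, and I expect the independence bound to be the main obstacle.

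For triangle-freeness, the plan is to show that three pairwise adjacent flags would force two distinct lines through two distinct common points. The incidence structure of the affine plane rules this out. The assumption $q$ prime would be used so that the slope arithmetic modulo $q$ behaves as in a field.

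For $\alpha(H_q)\le 2q^2+q-1$, I would split an independent set $I$ according to the $q+1$ parallel classes of the lines in its flags. I would then show that the independence condition forces, for each class, at most about two flags per point of the plane, with only small overlap corrections between classes. Summing these per-class counts should give the bound $2q^2+q-1$. If this direct count fails, I would follow Kocbek's Theorem~4.2 argument, which bounds $\alpha$ via the bipartite description. The condition $q\ge 11$ should enter only to absorb lower-order error terms in that count.
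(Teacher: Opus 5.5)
Your digraph-level argument is correct and is exactly the paper's. The construction (four copies of $H_q$ with every edge doubled, all arcs from $V_i$ to $V_{i+1}$) matches, and your checks of the order, the absence of directed $3$-cycles, $\alpha_2(D_q)=4\alpha(H_q)$, Hamiltonicity and $\kappa(D_q)=q^2(q+1)$ all go through as written.

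The gap is the graph $H_q$ itself, which carries all of the content of the $\alpha_2$ bound and of the no-$3$-cycle property. You never say when two flags are adjacent. ``An incidence condition twisted by an arithmetic rule modulo $q$ on the slopes'' is not a definition, so neither triangle-freeness nor $\alpha(H_q)\le 2q^2+q-1$ can be checked. Since you commit to verifying these directly rather than citing~\cite{KPR,Kocbek}, this part of the proof is missing.

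Your proposed route to the independence bound also does not work as described. Each point lies in exactly one flag of each parallel class, so a per-point bound inside a class is vacuous, and nothing in the sketch produces the exact constant $2q^2+q-1$. Your expectation that $q\ge 11$ absorbs lower-order error terms is a further sign that the mechanism is missing. In the paper the bound is exact for every prime $q$, and $q\ge 11$ is not used in this theorem at all; it is needed only afterwards, to get $\kappa(D_q)\ge\alpha_2(D_q)+1$.

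The missing idea is a linear order on the points, as in the ordered superline graph of Kostochka, Pudl\'ak and R\"odl. Let $G_q$ be the point--line incidence graph of the affine plane over $\mathbb F_q$, and fix an order $\prec$ on the points. For flags $uw,u'w'$ with $u\prec u'$, make $z_{uw}z_{u'w'}$ an edge if and only if $w\ne w'$ and $u$ lies on $w'$; flags at the same point are nonadjacent.

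\emph{Triangle-freeness.} Take a triangle on flags $u_1w_1,u_2w_2,u_3w_3$ with $u_1\prec u_2\prec u_3$. The three adjacencies and the flag $u_2w_2$ force $u_1w_2,u_1w_3,u_2w_2,u_2w_3\in E(G_q)$ with $w_2\ne w_3$. So two distinct points lie on two distinct common lines, a $4$-cycle in $G_q$. This is impossible because two points determine one line.

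\emph{Independence bound.} For an independent set $I$, the edges $\{uw: z_{uw}\in I\}$ form a forest in $G_q$. Indeed, on any cycle take its $\prec$-minimal point $u$, its two cycle neighbours $w,w'$, and the other cycle neighbour $u'$ of $w'$. Then $u\prec u'$, $w\ne w'$ and $uw'\in E(G_q)$, so $z_{uw}$ and $z_{u'w'}$ would be adjacent vertices of $I$. Hence $|I|\le|V(G_q)|-1=q^2+q(q+1)-1=2q^2+q-1$, with no splitting into parallel classes and no error terms.
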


Let $q$ be a prime, and let $\mathbb F_q=\mathbb Z/q\mathbb Z$
denote the field of integers modulo $q$.
Define a bipartite graph $G_q$ with vertex classes
\[
	U=\{u_{x,y}:x,y\in\mathbb F_q\},\quad
	W=\{w_{a,b}:a\in\mathbb F_q\cup\{\infty\},\ b\in\mathbb F_q\}.
\]
Here $\infty$ is an additional label, not a field element.
For $a,b\in\mathbb F_q$, join $u_{x,y}$ to $w_{a,b}$ if and only
if $y=ax+b$, and join $u_{x,y}$ to $w_{\infty,b}$ if and only
if $x=b$. There are no other edges.
Thus $G_q$ is the point--line incidence graph of the affine plane
over $\mathbb F_q$, with $U$ representing points and $W$ representing
lines.

Use $0,1,\ldots,q-1$ as the representatives of the field elements.
Order $U$ lexicographically, writing $u_{x,y}\prec u_{x',y'}$
if $x<x'$, or if $x=x'$ and $y<y'$, where the comparisons are
between these integer representatives.
Define a graph $H_q$ with one vertex $z_{uw}$ for each edge
$uw\in E(G_q)$, where $u\in U$ and $w\in W$.
For $uw,u'w'\in E(G_q)$ with $u\prec u'$, define adjacency by
\begin{equation}\label{eq:affine-edge-adjacency}
	z_{uw}z_{u'w'}\in E(H_q)
	\quad\Longleftrightarrow\quad
	w\ne w'\ \text{and}\ uw'\in E(G_q).
\end{equation}
Vertices corresponding to edges with the same endpoint in $U$
are nonadjacent.

We first record the properties of $G_q$ needed below.

\begin{lemma}\label{lem:affine-base-properties}
	The graph $G_q$ has $|U|=q^2$ and $|W|=q(q+1)$.
	Every vertex in $U$ has degree $q+1$, and every vertex in $W$
	has degree $q$.
	Any two distinct vertices in $U$ have exactly one common neighbour.
	Consequently, $G_q$ contains no $4$-cycle and
	$|V(H_q)|=|E(G_q)|=q^2(q+1)$.
\end{lemma}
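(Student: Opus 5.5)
The plan is to verify each assertion directly from the definition of $G_q$. Throughout we use only that $\mathbb F_q$ is a field, which holds because $q$ is prime.

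\emph{Counting.} The counts of $U$ and $W$ are read off from the index sets. We have $|U|=q^2$ since $(x,y)\in\mathbb F_q^2$. We have $|W|=(q+1)q$ since $a$ ranges over the $q+1$ labels $\mathbb F_q\cup\{\infty\}$ and $b$ ranges over $\mathbb F_q$.

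\emph{Degrees.} Fix $u_{x,y}$. For each $a\in\mathbb F_q$ there is exactly one $b$ with $y=ax+b$, namely $b=y-ax$. There is also exactly one vertex $w_{\infty,b}$ adjacent to it, namely $b=x$. Hence $d(u_{x,y})=q+1$. Now fix $w\in W$. If $w=w_{a,b}$ with $a\in\mathbb F_q$, then for each $x$ there is exactly one $y$, namely $y=ax+b$. If $w=w_{\infty,b}$, the neighbours are the $u_{b,y}$ with $y\in\mathbb F_q$. Either way the degree is $q$.

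\emph{Unique common neighbour.} Take distinct $u_{x,y},u_{x',y'}$. If $x=x'$, then $w_{\infty,x}$ is a common neighbour. No $w_{a,b}$ with $a\in\mathbb F_q$ can be one, since $y=ax+b$ and $y'=ax+b$ would force $y=y'$. If $x\ne x'$, no $w_{\infty,b}$ is adjacent to both. The system $y=ax+b$, $y'=ax'+b$ then has the unique solution $a=(y-y')(x-x')^{-1}$, $b=y-ax$, since $x-x'$ is invertible. This is the only step using primality.

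\emph{Consequences.} A $4$-cycle in a bipartite graph has two vertices on each side. Its two $U$-vertices would then share two common neighbours in $W$, contradicting uniqueness. So $G_q$ is $C_4$-free. Finally, $V(H_q)$ is by definition in bijection with $E(G_q)$. Summing degrees over $U$ gives $|E(G_q)|=q^2(q+1)$, and the count over $W$, namely $q(q+1)\cdot q$, agrees. None of these steps is a real obstacle; the only care needed is the case split $x=x'$ versus $x\ne x'$, including the $\infty$ class.
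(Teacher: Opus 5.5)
Your proposal is correct and follows the paper's proof essentially step for step. It uses the same explicit neighbourhood descriptions for the degrees, the same case split $x=x'$ versus $x\ne x'$ for the unique common neighbour, and the same deduction of $C_4$-freeness and of $|E(G_q)|=q^2(q+1)$ by summing degrees over $U$; your extra count over $W$ is only a harmless consistency check.
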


\begin{proof}
	The sizes of $U$ and $W$ follow from their definitions.
	For $u_{x,y}\in U$, we have
	\[
	N_{G_q}(u_{x,y})
	=\{w_{a,y-ax}:a\in\mathbb F_q\}\cup\{w_{\infty,x}\}.
	\]
	These $q+1$ neighbours are distinct.
	For $a,b\in\mathbb F_q$, the neighbours of $w_{a,b}$ are
	precisely the $q$ vertices $u_{x,ax+b}$ with $x\in\mathbb F_q$.
	The neighbours of $w_{\infty,b}$ are precisely the $q$ vertices
	$u_{b,y}$ with $y\in\mathbb F_q$.
	This proves the degree assertions.
	
	Consider distinct vertices $u_{x,y},u_{x',y'}\in U$.
	If $x=x'$, then $y\ne y'$, and $w_{\infty,x}$ is a common
	neighbour.
	No $w_{a,b}$ with $a\in\mathbb F_q$ can be adjacent to both,
	since this would give $y=ax+b=y'$.
	Thus $w_{\infty,x}$ is their unique common neighbour.
	If $x\ne x'$, there is no common neighbour of the form
	$w_{\infty,b}$.
	The equations $y=ax+b$ and $y'=ax'+b$ have the unique solution
	$a=(y-y')(x-x')^{-1}$ and $b=y-ax$, giving exactly one common
	neighbour $w_{a,b}$.
	A $4$-cycle would give two vertices in $U$ with two distinct
	common neighbours, so $G_q$ contains no $4$-cycle.
	
	Finally, summing the degrees over $U$ gives
	$|E(G_q)|=q^2(q+1)$, and $|V(H_q)|=|E(G_q)|$ by construction.
\end{proof}

The construction of $H_q$ is a special case of the ordered superline-graph construction of Kostochka, Pudl\'ak and R\"odl~\cite{KPR}; its affine-plane form was described by Kocbek~\cite{Kocbek}.
The triangle-free property and the independence-number bound below also follow from the general construction in~\cite{KPR}.
For completeness, we give direct proofs.

\begin{lemma}\label{lem:affine-triangle-free}
	The graph $H_q$ is triangle-free.
\end{lemma}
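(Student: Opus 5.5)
We argue by contradiction: the adjacency rule \eqref{eq:affine-edge-adjacency} should force a $4$-cycle in $G_q$, which \Cref{lem:affine-base-properties} forbids.

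Suppose $z_{u_1w_1},z_{u_2w_2},z_{u_3w_3}$ are pairwise adjacent in $H_q$, with each $u_iw_i\in E(G_q)$, $u_i\in U$ and $w_i\in W$. Vertices arising from edges with the same endpoint in $U$ are nonadjacent, so $u_1,u_2,u_3$ are pairwise distinct. Since $\prec$ is a total order on $U$, we may relabel so that $u_1\prec u_2\prec u_3$.

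Next we read off the consequences of \eqref{eq:affine-edge-adjacency} for each pair $i<j$: we get $w_i\ne w_j$ and $u_iw_j\in E(G_q)$. From the pair $(1,2)$ we get $u_1w_2\in E(G_q)$. From $(1,3)$ we get $u_1w_3\in E(G_q)$. From $(2,3)$ we get $u_2w_3\in E(G_q)$ and $w_2\ne w_3$. By definition of the vertices of $H_q$, we also have $u_2w_2\in E(G_q)$.

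Hence the distinct vertices $u_1,u_2\in U$ have the two distinct common neighbours $w_2$ and $w_3$. Equivalently, $u_1w_2u_2w_3u_1$ is a $4$-cycle in $G_q$. This contradicts \Cref{lem:affine-base-properties}, which says that any two distinct vertices of $U$ have exactly one common neighbour. Therefore $H_q$ is triangle-free.

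There is no real obstacle here. The only point needing care is checking that the three $U$-endpoints are distinct, so that they can be totally ordered and the asymmetric rule \eqref{eq:affine-edge-adjacency} applied to each pair in the correct direction. The vertex $u_3$ and the edge $u_3w_3$ play no role beyond supplying the relation between $u_2$ and $w_3$.
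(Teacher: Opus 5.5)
Your proof is correct and is essentially the paper's argument. You order the three distinct $U$-endpoints, read off $u_1w_2,u_1w_3,u_2w_3\in E(G_q)$ and $w_2\ne w_3$ from the adjacency rule, add $u_2w_2\in E(G_q)$, and obtain the $4$-cycle $u_1w_2u_2w_3u_1$, which contradicts \Cref{lem:affine-base-properties}.
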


\begin{proof}
	Suppose that $z_{u_1w_1},z_{u_2w_2},z_{u_3w_3}$ form a triangle
	in $H_q$.
	By the definition of adjacency in $H_q$, the vertices $u_1,u_2,u_3$
	are pairwise distinct, and the vertices $w_1,w_2,w_3$ are also
	pairwise distinct.
	Relabel them so that $u_1\prec u_2\prec u_3$.
	Their pairwise adjacencies and~\eqref{eq:affine-edge-adjacency}
	give $u_1w_2,u_1w_3,u_2w_3\in E(G_q)$.
	Also, $u_2w_2\in E(G_q)$ by the definition of $z_{u_2w_2}$.
	Thus $u_1w_2u_2w_3u_1$ is a $4$-cycle in $G_q$, contrary
	to~\Cref{lem:affine-base-properties}.
\end{proof}

\begin{lemma}\label{lem:affine-independence}
	Every independent set in $H_q$ corresponds to a set of edges
	forming a forest in $G_q$.
	In particular, $\alpha(H_q)\le2q^2+q-1$.
\end{lemma}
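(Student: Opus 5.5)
We argue by contradiction: take an independent set $I\subseteq V(H_q)$, let $F=\{uw\in E(G_q): z_{uw}\in I\}$, and suppose $F$ contains a cycle. Since $G_q$ is bipartite and contains no $4$-cycle by \Cref{lem:affine-base-properties}, any such cycle has length $2m$ with $m\ge3$. Write it as $u_1w_1u_2w_2\cdots u_mw_mu_1$, where the $u_i\in U$ and $w_i\in W$ are distinct, indices are taken modulo $m$, and the cycle edges are $u_iw_i$ and $w_iu_{i+1}$. The task is to locate two edges of the cycle whose corresponding vertices of $H_q$ are adjacent according to~\eqref{eq:affine-edge-adjacency}.

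The key step is to use the $\prec$-minimal vertex of $U$ on the cycle. Let $u$ be this vertex; after relabelling we may take $u=u_1$. Its two cycle edges are $u_1w_1$ and $u_1w_m$. Consider now the cycle edge $w_1u_2$, that is, the edge $u_2w_1$. Since $u_1\prec u_2$, we compare $z_{u_1w_m}$ with $z_{u_2w_1}$. By~\eqref{eq:affine-edge-adjacency}, these two vertices are adjacent exactly when $w_m\ne w_1$ and $u_1w_1\in E(G_q)$. The first condition holds because $m\ge2$ and the $w_i$ are distinct. The second holds because $u_1w_1$ is itself a cycle edge. Hence $z_{u_1w_m}$ and $z_{u_2w_1}$ are adjacent, although both lie in $I$. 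This contradicts independence, so $F$ is a forest. The main care needed is in applying the ordered definition correctly: the smaller vertex $u$ must be paired with the line $w'$ of the other edge, and choosing $u_1$ minimal guarantees that $u_1\prec u_2$ is the right orientation. The degenerate case $m=2$ would be a $4$-cycle, which is already excluded.

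For the numerical bound, observe that a forest in $G_q$ has at most $|V(G_q)|-1$ edges. By \Cref{lem:affine-base-properties}, this equals $q^2+q(q+1)-1=2q^2+q-1$. Since the map $z_{uw}\mapsto uw$ is injective, it follows that $|I|\le 2q^2+q-1$, which gives $\alpha(H_q)\le2q^2+q-1$.
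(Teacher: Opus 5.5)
Your proof is correct and is essentially the paper's argument: take the $\prec$-minimal point $u$ on the cycle, its two cycle neighbours $w,w'$, and the other cycle neighbour $u'$ of $w'$; then $z_{uw}$ and $z_{u'w'}$ are adjacent by the ordered rule, and a forest has at most $|V(G_q)|-1=2q^2+q-1$ edges. One small point: the appeal to the absence of $4$-cycles is unnecessary, since your argument already works for $m=2$ (distinctness of $w_1,w_2$ is all it uses).
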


\begin{proof}
	Let $I\subseteq V(H_q)$ be independent, and let $G_I$ be the
	spanning subgraph of $G_q$ with edge set
	$\{uw\in E(G_q):z_{uw}\in I\}$.
	Suppose that $G_I$ contains a cycle $C$.
	Choose the smallest vertex $u$ of $U\cap V(C)$ in the order
	$\prec$, and let $w,w'$ be its two neighbours on $C$.
	Let $u'$ be the neighbour of $w'$ on $C$ other than $u$.
	Then $u\prec u'$ and $w\ne w'$.
	Since $uw,uw',u'w'\in E(G_I)$, the vertices
	$z_{uw},z_{u'w'}\in I$ are adjacent by
	\eqref{eq:affine-edge-adjacency}, a contradiction.
	Hence $G_I$ is a forest, and
	\[
	|I|=|E(G_I)|\le |V(G_q)|-1=2q^2+q-1.\qedhere
	\]
\end{proof}

We now use four copies of $H_q$ to construct the required digraph.

\begin{proof}[Proof of \Cref{thm:pancyclic-construction}]
	Let $q\ge11$ be a prime and let $m=q^2(q+1)=|V(H_q)|$.
	Take four vertex-disjoint copies of $H_q$, with vertex sets
	$V_0,V_1,V_2,V_3$.
	Replace every edge within each copy by two opposite arcs.
	For each $i\in\{0,1,2,3\}$, add all arcs from $V_i$ to
	$V_{i+1}$, where the subscripts of the parts are read modulo $4$.
	Add no other arcs between distinct parts, and denote the
	resulting digraph by $D_q$.
	Then $|V(D_q)|=4m$.
	
	No directed $2$-cycle meets two distinct parts, so $G_2(D_q)$
	is the disjoint union of four copies of $H_q$.
	By~\Cref{lem:affine-independence}, we have
	$\al(D_q)=4\alpha(H_q)\le8q^2+4q-4$.
	
	We next prove that $\kappa(D_q)=m$.
	Let $S\subseteq V(D_q)$ with $|S|<m$.
	Every $V_i\setminus S$ is nonempty.
	Take distinct vertices $x,y\in V(D_q)\setminus S$, with
	$x\in V_i$, and choose $\eta \in\{1,2,3,4\}$ so that
	$y\in V_{i+\eta }$.
	Let $x_0=x$ and $x_{\eta}=y$, and choose
	$x_j\in V_{i+j}\setminus S$ for $1\le j<\eta $.
	These vertices are distinct, and
	$x_{j-1}x_j\in A(D_q-S)$ for every $1\le j\le \eta $.
	Hence $x_0x_1\cdots x_{\eta}$ is a directed $x$--$y$ path in
	$D_q-S$.
	Thus $D_q-S$ is strongly connected and $\kappa(D_q)\ge m$.
	Conversely, in $D_q-V_0$ no arc leaves $V_3$ for another part,
	so no vertex of $V_3$ can reach $V_1$.
	Therefore $\kappa(D_q)\le |V_0|=m$.
	
	Any directed cycle meeting distinct parts must visit all four
	parts, since arcs between parts go only from $V_i$ to $V_{i+1}$.
	Hence a directed $3$-cycle would lie in a single part, and its
	vertices would form a triangle in $H_q$, contrary
	to~\Cref{lem:affine-triangle-free}.
	
	Finally, write $V_i=\{v_{i,1},\ldots,v_{i,m}\}$.
	The $m$ vertex-disjoint directed paths
	$v_{0,j}v_{1,j}v_{2,j}v_{3,j}$, for $1\le j\le m$, together
	with the arcs $v_{3,j}v_{0,j+1}$ for $1\le j<m$ and
	$v_{3,m}v_{0,1}$, form a Hamilton cycle in $D_q$.
\end{proof}

\begin{proof}[Proof of \Cref{thm:pancyclic-obstruction}]
	For the digraphs in~\Cref{thm:pancyclic-construction},
	\[
	\frac{\kappa(D_q)}{\al(D_q)}
	\ge \frac{q^2(q+1)}{8q^2+4q-4}
	=\frac{q^2}{8q-4}
	>\frac q8.
	\]
	Given $c>0$, choose a prime $q\ge11$ with $q>8c$.
	Then $D_q$ is Hamiltonian and satisfies
	$\kappa(D_q)>c\al(D_q)$, but it has no directed $3$-cycle
	and is therefore not pancyclic.
	Moreover, $q\ge11$ gives $\kappa(D_q)\ge\al(D_q)+1$,
	so these digraphs also disprove the pancyclicity conjecture
	of Jackson and Ordaz for digraphs.
\end{proof}


\begin{thebibliography}{99}
		\raggedright
		\bibitem{AiShi}
		J.~Ai and Y.~Shi,
		A cubic refinement of Jackson's Chv\'atal--Erd\H{o}s condition
		for Hamilton cycles in digraphs,
		preprint (2026), arXiv:2606.08401.
		
		\bibitem{AS}
		N.~Alon and J.~H.~Spencer,
		\emph{The Probabilistic Method}, fourth ed., Wiley, Hoboken, NJ, 2016.
		
		\bibitem{BG}
		J.~Bang-Jensen and G.~Gutin,
		\emph{Digraphs: Theory, Algorithms and Applications}, second ed.,
		Springer, London, 2009.
		
		\bibitem{BondyP}
		J.~A.~Bondy,
		Pancyclic graphs I,
		\emph{J. Combin. Theory Ser. B} \textbf{11} (1971), 80--84.
		
		\bibitem{BondyMeta}
		J.~A.~Bondy,
		Pancyclic graphs: recent results,
		in \emph{Infinite and Finite Sets}, Vol.~I (Colloq., Keszthely, 1973),
		Colloq. Math. Soc. J\'anos Bolyai \textbf{10},
		North-Holland, Amsterdam, 1975, 181--187.
		
		\bibitem{BM}
		J.~A.~Bondy and U.~S.~R.~Murty,
		\emph{Graph Theory}, Graduate Texts in Mathematics \textbf{244},
		Springer, London, 2008.
		
		\bibitem{Caro}
		Y.~Caro,
		New results on the independence number,
		Technical Report, Tel-Aviv University, 1979.
		
		\bibitem{CFG}
		G.~Chen, R.~J.~Faudree and R.~J.~Gould,
		Intersections of longest cycles in $k$-connected graphs,
		\emph{J. Combin. Theory Ser. B} \textbf{72} (1998), 143--149.
		
		\bibitem{CE}
		V.~Chv\'atal and P.~Erd\H{o}s,
		A note on Hamiltonian circuits,
		\emph{Discrete Math.} \textbf{2} (1972), 111--113.
		
		\bibitem{DMSCE}
		N.~Dragani\'c, D.~Munh\'a Correia and B.~Sudakov,
		Chv\'atal--Erd\H{o}s condition for pancyclicity,
		\emph{J. Assoc. Math. Res.} \textbf{2} (2024), 1--14.
		
		\bibitem{DMSHam}
		N.~Dragani\'c, D.~Munh\'a Correia and B.~Sudakov,
		Pancyclicity of Hamiltonian graphs,
		\emph{J. Eur. Math. Soc.} \textbf{28} (2026), 4047--4062.
		
		\bibitem{DR}
		D.~Dubhashi and D.~Ranjan,
		Balls and bins: a study in negative dependence,
		\emph{Random Structures Algorithms} \textbf{13} (1998), 99--124.
		
		\bibitem{ErdosCycles}
		P.~Erd\H{o}s,
		Some problems in graph theory,
		in \emph{Hypergraph Seminar} (Ohio State Univ., Columbus, 1972),
		Lecture Notes in Mathematics \textbf{411},
		Springer, Berlin, 1974, 187--190.
		
		\bibitem{GM}
		T.~Gallai and A.~N.~Milgram,
		Verallgemeinerung eines graphentheoretischen Satzes von R\'edei,
		\emph{Acta Sci. Math. (Szeged)} \textbf{21} (1960), 181--186.
		
		\bibitem{GH}
		A.~Ghouila-Houri,
		Une condition suffisante d'existence d'un circuit hamiltonien,
		\emph{C. R. Acad. Sci. Paris} \textbf{251} (1960), 495--497.
		
		\bibitem{HTPancyclic}
		R.~H\"aggkvist and C.~Thomassen,
		On pancyclic digraphs,
		\emph{J. Combin. Theory Ser. B} \textbf{20} (1976), 20--40.
		
		\bibitem{Han}
		J.~Han,
		On vertex-disjoint paths in regular graphs,
		\emph{Electron. J. Combin.} \textbf{25} (2018), Paper No.~2.12, 6 pp.
		
		\bibitem{Jackson}
		B.~Jackson,
		A Chv\'atal--Erd\H{o}s condition for Hamilton cycles in digraphs,
		\emph{J. Combin. Theory Ser. B} \textbf{43} (1987), 245--252.
		
		\bibitem{JO}
		B.~Jackson and O.~Ordaz,
		Chv\'atal--Erd\H{o}s conditions for paths and cycles in graphs and digraphs:
		a survey, \emph{Discrete Math.} \textbf{84} (1990), 241--254.
		
		\bibitem{JDP}
		K.~Joag-Dev and F.~Proschan,
		Negative association of random variables with applications,
		\emph{Ann. Statist.} \textbf{11} (1983), 286--295.
		
		\bibitem{KKO}
		P.~Keevash, D.~K\"uhn and D.~Osthus,
		An exact minimum degree condition for Hamilton cycles in oriented graphs,
		\emph{J. London Math. Soc.} (2) \textbf{79} (2009), 144--166.
		
		\bibitem{KS}
		P.~Keevash and B.~Sudakov,
		Pancyclicity of Hamiltonian and highly connected graphs,
		\emph{J. Combin. Theory Ser. B} \textbf{100} (2010), 456--467.
		
		\bibitem{Kocbek}
		M.~Kocbek,
		Explicit geometric construction of Ramsey graphs,
		preprint (2025), arXiv:2507.09235.
		
		\bibitem{KPR}
		A.~Kostochka, P.~Pudl\'ak and V.~R\"odl,
		Some constructive bounds on Ramsey numbers,
		\emph{J. Combin. Theory Ser. B} \textbf{100} (2010), 439--445.
			
		\bibitem{KO}
		D.~K\"uhn and D.~Osthus,
		A survey on Hamilton cycles in directed graphs,
		\emph{European J. Combin.} \textbf{33} (2012), 750--766.
				
	   \bibitem{Letzter}
		S.~Letzter,
		Pancyclicity of highly connected graphs,
		\emph{J. Eur. Math. Soc.}, to appear,
		DOI 10.4171/JEMS/1808; arXiv:2306.12579.
							
		\bibitem{LPY}
		A.~Lo, V.~Patel and M.~A.~Y\i ld\i z,
		Cycle partitions in dense regular digraphs and oriented graphs,
		\emph{Forum Math. Sigma} \textbf{13} (2025), e79, 31 pp.
				
		\bibitem{Pikhurko}
		O.~Pikhurko,
		A note on the Tur\'an function of even cycles,
		\emph{Proc. Amer. Math. Soc.} \textbf{140} (2012), 3687--3692.
		
		\bibitem{Wei}
		V.~K.~Wei,
		A lower bound on the stability number of a simple graph,
		Bell Laboratories Technical Memorandum 81-11217-9, 1981.
			


	\end{thebibliography}
\end{document}